\documentclass[english, letterpaper,12pt,reqno]{amsart}

\pdfoutput=1

\usepackage[utf8]{inputenc}
\usepackage[T1]{fontenc}
\usepackage{lmodern}
\usepackage{amsthm, amssymb, amsmath, amsfonts, mathrsfs}
\usepackage[dvipsnames,svgnames,x11names]{xcolor}
\definecolor{mPurple}{HTML}{8959a8}
\usepackage[defaultcolor=purple]{changes}
\usepackage{microtype}

\usepackage[pagebackref,colorlinks=true,pdfpagemode=none,urlcolor=blue,
linkcolor=darkblue,citecolor=darkblue]{hyperref}
\usepackage{pdfcomment}
\usepackage{amsmath,amsfonts,amssymb,amsthm}
\usepackage{amsthm, amssymb, amsmath, amsfonts, mathrsfs}

\usepackage{mathrsfs}
\usepackage{MnSymbol}
\usepackage{scalerel} 

\usepackage{color}
\usepackage{accents}

\usepackage{bbm}

\definecolor{darkred}{rgb}{0.9,0.1,0.1}
\definecolor{darkgreen}{rgb}{0,0.5,0}
\definecolor{darkblue}{rgb}{0, 0.23, 0.46}

\usepackage{amsmath}
\usepackage{amssymb}
\usepackage{amsfonts}
\usepackage{mathrsfs}
\usepackage{mathtools}

\usepackage[shortlabels]{enumitem}
\usepackage{amscd} 

\usepackage[margin=1in,footskip=.25in]{geometry}

\newtheorem{theorem}{Theorem}[]
\newtheorem{lemma}[theorem]{Lemma}

\newtheorem{proposition}[theorem]{Proposition}

\theoremstyle{remark}
\newtheorem{remark}[theorem]{Remark}

\renewenvironment{proof}[1][Proof]{ {\itshape \noindent {#1.}} }{$\Box$
\medskip}

\theoremstyle{plain}

\newcommand{\E}{\mathbb{E}}

 \usepackage{bm}

\def\bbE{\mathbb E}

\def\tr{{\rm tr}}

\begin{document}

\title{Intermittency of geometric Brownian motion on \textbf{SL}(\lowercase{n}) }

\author[Sefika Kuzgun]{Sefika Kuzgun}
\author[Felix Otto]{Felix Otto}
\author[Christian Wagner]{Christian Wagner}

\address[Sefika Kuzgun]{Max Planck Institute for Mathematics in the Sciences,\newline
Inselstraße 22, 04103 Leipzig, Germany.}

\address[Felix Otto]{Max Planck Institute for Mathematics in the Sciences,\newline
Inselstraße 22, 04103 Leipzig, Germany.}

\address[Christian Wagner]{%
	Institute of Science and Technology Austria,\newline
	Am Campus 1,
	3400 Klosterneuburg, Austria.
}

\begin{abstract}

This short note is motivated by a recently discovered connection 
between a drift-diffusion process in $n$-dimensional Euclidean space
with a divergence-free drift sampled from 
a stationary and isotropic Gaussian ensemble of critical scaling on the one hand,
and a geometric Brownian motion on $\textbf{SL}(n)$ on the other hand.
This can be seen as a tensorial form of a stochastic exponential;
it thus is naturally intermittent, which transfers to the pair distance
of the drift-diffusion process.

\smallskip

In this note, we quantify the intermittency of the geometric Brownian motion $\{F_\tau\}_{\tau\ge0}$ 
on $\textbf{SL}(n)$ also in dimensions $n>2$. We do so in two (related) ways: 
1) by identifying the  
exponential growth rate for the $2p$-th stochastic moment $\mathbb{E}|F_\tau|^{2p}$
with its anomalous dependence on $p$ (and $n$), and 2) by quantifying a non-tightness of
$|F_\tau|^2/\mathbb{E}|F_\tau|^2$ as $\tau\uparrow\infty$. 
It is the second property that transmits to the drift-diffusion process.

\smallskip

The arguments rely on stochastic analysis: We write $\{F_\tau\}_{\tau\geq 0}$
as the solution of $dF=F_\tau\circ dB$ with $\{B_\tau\}_{\tau\geq 0}$ a Brownian motion
on the Lie algebra $\mathfrak{sl}(n)$. The arguments leverage isotropy:
The diffusion projects onto the spectrum of the Gram matrix $G=F^*F$,
as captured by ${\rm tr}G^p$. 

\end{abstract}
\maketitle


\section{Introduction}
\noindent
We are interested in the  process $\{F_\tau\}_{\tau \geq 0}$ that solves the Stratonovich stochastic differential equation (SDE)
\begin{align}\label{F}
    dF=F_\tau\circ dB, \quad F_{\tau=0}=id ,
\end{align}
where $\{B_\tau\}_{\tau \geq 0}$ is a Brownian motion on ${\frak{sl}}(n):=\{\tr B=0\}$ that satisfies the following three assumptions:
\begin{enumerate}[label=(B\arabic*)]
    \item \label{brownian1} $OB O^{-1}=_{\text{law}}B$ for all $O\in \textbf{O}(n):=\{O^*O=id\}$,
    \item \label{brownian2} $\bbE[B_{\tau} B_\tau] =0$, and
    \item\label{brownian3} $\bbE[B_\tau^* B_\tau]=\tau \text{id}$. 
    \end{enumerate}
Since the chain rule holds for Stratonovich SDEs, $F\in \textbf{SL}(n):=\{\text{det} F=1\}$. In \cite{MOW25}, it is proved that for $n\geq 2$, there is a unique Brownian motion on $\frak{sl}(n)$ that satisfies the Assumption \ref{brownian1}-\ref{brownian3}. We are interested in exploring the intermittent behavior of $F$ in \eqref{F}. Our first result is on the moments:
\begin{proposition}[Intermittency] \label{prop:intermittency}For any $p\geq 1$ integer, we have 
\begin{align*}
   n \exp\left((p+\frac{2p(p-1)}{n+2})\tau \right) \leq  \E |F_\tau|^{2p}\leq n^{p} \exp\left((p+\frac{2p(p-1)}{n+2})\tau \right),
\end{align*}
where $|F|^2:=\tr F^*F$ denotes the square of the Frobenius norm.
\end{proposition}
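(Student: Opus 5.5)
The plan is to pass to the Gram matrix $G=F^*F$ and to show that the scalar quantity $m_p(\tau):=\E\,\tr G_\tau^p$ is \emph{exactly} exponential:
\begin{align*}
m_p(\tau)=n\exp\Big(\big(p+\tfrac{2p(p-1)}{n+2}\big)\tau\Big).
\end{align*}
Granting this, the proposition follows from the elementary spectral inequalities
\begin{align*}
\tr G^p\le(\tr G)^p\le n^{p-1}\tr G^p
\end{align*}
for positive semidefinite $G$, together with $|F|^{2p}=(\tr G)^p$.

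\textbf{Step 1: the covariance of $B$.} By isotropy \ref{brownian1}, the covariance is an $\textbf{O}(n)$-invariant 4-tensor:
\begin{align*}
\E[B_{ij}B_{kl}]=\tau\big(a\,\delta_{ik}\delta_{jl}+b\,\delta_{il}\delta_{jk}+c\,\delta_{ij}\delta_{kl}\big).
\end{align*}
The three constraints pin down the constants.
\begin{itemize}
\item Tracelessness gives $a+b+cn=0$.
\item Assumption \ref{brownian2} gives $a+bn+c=0$.
\item Assumption \ref{brownian3} gives $an+b+c=1$.
\end{itemize}
Hence $b=c=-\frac{1}{(n+2)(n-1)}$ and $a=\frac{n+1}{(n+2)(n-1)}$.

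The Itô–Stratonovich correction in \eqref{F} is $\frac12 F\,\E[dB\,dB]=0$ by \ref{brownian2}. So $dF=F\,dB$ in the Itô sense, and
\begin{align*}
dG=dB^*G+G\,dB+dB^*G\,dB,\qquad \E[dB^*G\,dB]=\big(a\,\tr G\,id+(b+c)G\big)d\tau .
\end{align*}
For $p=1$ this already gives $\frac{d}{d\tau}m_1=(an+2b)m_1=m_1$, which is consistent with the claimed rate.

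\textbf{Step 2: Itô's formula for $\tr G^p$.} This is the computational core. The drift has two contributions.
\begin{itemize}
\item The linear term is $p\,\tr\big(G^{p-1}\E[dB^*G\,dB]\big)$.
\item The quadratic terms come from pairing $dB^*G+G\,dB$ with itself inside $\sum_{k}\tr(G^{k}\,dG\,G^{p-2-k}\,dG)$, contracted against the covariance tensor.
\end{itemize}
Each contraction produces either a single trace $\tr G^p$ (from the $\delta_{ik}\delta_{jl}$ and $\delta_{il}\delta_{jk}$ structures combined with $G$ symmetric) or a split product $\tr G^{k}\tr G^{p-k}$ (from the $a$-term and the $c$-term).

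The key point I will try to verify is that the split products cancel. The coefficients $a$, $b=c$ are tuned by \ref{brownian2}, and the terms $a\,\tr G^{k}\tr G^{p-k}$ should be cancelled by the corresponding $b$ and $c$ contributions. If so, what remains is
\begin{align*}
\tfrac{d}{d\tau}m_p=\lambda_p m_p,\qquad \lambda_p=p+\tfrac{2p(p-1)}{n+2}.
\end{align*}
The initial value is $m_p(0)=\tr\,id=n$. A stopping or localization argument, or simply finiteness of all moments of the linear SDE, justifies dropping the martingale part.

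\textbf{Main obstacle.} The hardest step is the bookkeeping in Step 2, namely checking that the split-trace terms really cancel. If they do not cancel exactly, the fallback is as follows. I would work with the finite-dimensional space of degree-$p$ homogeneous polynomials $\prod_i\tr G^{k_i}$ with $\sum k_i=p$, which the generator preserves by isotropy. Using Step 1, I would identify the leading eigenvalue $\lambda_p$ and a nonnegative eigenfunction comparable to both $\tr G^p$ and $(\tr G)^p$. The comparison constants would then be controlled by the same inequalities $\tr G^p\le(\tr G)^p\le n^{p-1}\tr G^p$.

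Once Step 2 closes, the two-sided bound is immediate:
\begin{align*}
n\,e^{\lambda_p\tau}=\E\,\tr G^p\le\E|F_\tau|^{2p}\le n^{p-1}\E\,\tr G^p=n^p e^{\lambda_p\tau}.
\end{align*}
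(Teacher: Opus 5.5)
Your argument rests on the claim that the split-trace terms cancel, so that $\E\tr G_\tau^p=n e^{\lambda_p\tau}$ exactly. That claim is false. Carrying out your Step 2 for $p=2$ with your own constants gives
\begin{align*}
\frac{d}{d\tau}\E\tr G^2=2(a+b)\,\E\tr^2 G+(2+2a+6b)\,\E\tr G^2,\qquad a+b=\frac{n}{(n-1)(n+2)}\neq 0 .
\end{align*}
The reason is that your three constraints force $b=c$ and $a=-(n+1)b$, not $a=-b$.

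Coupled with the corresponding equation for $\E\tr^2 G$, this closed $2\times 2$ linear system yields
\begin{align*}
\E\tr G_\tau^2=\frac{n(n+2)}{3}e^{(2+\frac{4}{n+2})\tau}-\frac{n(n-1)}{3}e^{(2-\frac{2}{n-1})\tau},
\end{align*}
which is not $n e^{(2+\frac{4}{n+2})\tau}$ for any $n\ge 2$. The same formula shows that your upper-bound chain $\E|F_\tau|^{2p}\le n^{p-1}\E\tr G_\tau^p$ cannot deliver the constant $n^p$. For $p=2$ its right-hand side is asymptotic to $\frac{n^2(n+2)}{3}e^{(2+\frac{4}{n+2})\tau}$, and $\frac{n+2}{3}>1$.

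The fallback does not repair this. The generator on degree-$p$ trace monomials both splits traces (the Itô correction of $\tr G^p$) and merges them (the quadratic variation of $\tr G$ contains $\tr G^2$), so it is not triangular. Showing that its leading eigenvalue is $\lambda_p$ is precisely the content of the proposition, and you give no argument for it.

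The missing idea is that no cancellation is needed. The split and merge terms have a definite sign, and you use a different observable for each bound. For the lower bound, the Itô drift of $\tr G^p$ is
\begin{align*}
\frac{d}{d\tau}\E\tr G^p=\Big(p+\frac{p(p-1)(n-2)}{\alpha_n}\Big)\E\tr G^p+\frac{pn}{\alpha_n}\sum_{j=1}^{p-1}\E\tr G^j\tr G^{p-j},\qquad \alpha_n=(n-1)(n+2).
\end{align*}
The inequality $\tr G^j\tr G^{p-j}\ge \tr G^p$ turns this into $\frac{d}{d\tau}\E\tr G^p\ge\lambda_p\E\tr G^p$. Hence $\E|F_\tau|^{2p}\ge\E\tr G_\tau^p\ge n e^{\lambda_p\tau}$.

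For the upper bound, apply Itô's formula to $(\tr G)^p$ rather than bounding $(\tr G)^p$ by $n^{p-1}\tr G^p$. Use $d\tr G=\text{martingale}+\tr G\,d\tau$ with quadratic variation $\frac{4}{\alpha_n}(n\tr G^2-\tr^2 G)\,d\tau$; in your constants this is $4\big((a+b)\tr G^2+c\tr^2 G\big)\,d\tau$. This gives
\begin{align*}
\frac{d}{d\tau}\E\tr^p G=\Big(p-\frac{2p(p-1)}{\alpha_n}\Big)\E\tr^p G+\frac{2p(p-1)n}{\alpha_n}\E\tr^{p-2}G\,\tr G^2 .
\end{align*}
Now $\tr G^2\le\tr^2 G$ gives $\frac{d}{d\tau}\E\tr^p G\le\lambda_p\E\tr^p G$, hence $\E|F_\tau|^{2p}\le n^p e^{\lambda_p\tau}$.

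Both differential inequalities have the same rate $p+\frac{2p(p-1)(n-1)}{\alpha_n}=\lambda_p$, which is why the two-sided bound is sharp in the exponent. This is exactly the paper's proof, and the covariance constants from your Step 1 are all you need to carry it out.
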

\noindent
Proposition~\ref{prop:intermittency} implies that 
\begin{align}
    \E|F_\tau|^{2p} \sim_{n,p}\footnotemark \left(\E^p |F_\tau|^2\right)^{1+\frac{2(p-1)}{n+2}}. \label{2pthmoment}
\end{align}
\footnotetext{Here $A \sim_{n,p} B$ means that there exist constants $c=c(n,p)$ and $C=C(n,p)$ depending only on $n$ and $p$ such that $cB\leq A\leq CB$.} 

\medskip \noindent
The fact that $2p$-th moment scales with a rate much larger than the $p$-th power of the second moment amounts to strongly non-Gaussian and intermittent behavior. Proposition~\ref{prop:intermittency} generalizes \cite[Lemma 2]{MOW25} for $n=2$ to general $n\ge 2$.

\medskip \noindent
The interest in studying the geometric Brownian motion comes from its
intriguing relation to the drift-diffusion process 
\begin{align}
    dX=b(X_t)dt+\sqrt{2}dW \label{dd}
\end{align}
with $b$ being a divergence-free and time-independent vector field. One is interested in the case where $b$ is sampled from a stationary and isotropic Gaussian ensemble with the scaling 
 \begin{align}
   \label{b} b(\mu \cdot)=\frac{1}{\mu}b \text{ in law for all } \mu>0,
\end{align}
in which convection and diffusion balance at every scale. After implementing an ultraviolet cutoff, w.l.o.g.\ at scale 1, $b$ is fixed up to a single constant which we describe in terms of $\mathbb{E}|b|^2=\epsilon^2 \frac{ n }{ 4 } $, where $\epsilon \ll 1$ can be interpreted as the P\'{e}clet number. It is now well-known that the mean-square displacement displays super-diffusive behavior $\frac{1}{2t}\bbE|X_t|^2 \approx \lambda(t)$ for $t \gg 1$, where 
\begin{align*}
\lambda(t):=\sqrt{1+\frac{\epsilon^2}{2} \ln(1+t)}.
\end{align*}
This behavior was rigorously established with increasing precision in \cite{TV12, CHT22, CMOW22,ABK24, MOW25}. 

\medskip \noindent
The relationship between the geometric Brownian motion process $F$ in \eqref{F} and the drift-diffusion process in \eqref{dd} was discovered in \cite{MOW25} on the level of the expected position, $u(x,t)$, (w.r.t. $W_t$) of the process $\{X_t\}_{t\geq 0}$ starting from $X_{t=0}=x$. More precisely, one obtains $\nabla u(0,t) \approx F_{\tau(T)}$ in law on average over $t\in (0,T)$, where $$\tau(T):=\ln \lambda(T), $$ see \cite[Theorem 2 \& (19)]{MOW25}.

\medskip \noindent
Our result shows that the intermittency of the drift-diffusion equation depends less on topology (stream lines of the divergence-free $b$ are closed iff $n=2$) and more on geometry ($\textbf{SL}(n)$ has less curvature as $n$ increases). 

\medskip \noindent
While Proposition~\ref{prop:intermittency} clearly expresses intermittency at the level of $F$, we are not able to transfer this higher-moment information to $\nabla u$. This is because we can capture the proximity of $\nabla u$ and $F$ only on the second moments. Hence we need to capture the shadow of the intermittency on the level of the second moment. In fact, it comes as a non-tightness result in the second moment in the sense that the extreme tails contain a substantial fraction of the second moment.
 \begin{proposition}[Non-tightness] \label{prop:nontightness}
    For $\tau \gg_n 1$\footnote{Here $\tau \gg_n 1$ means that there is a possibly large constant $C(n)$ depending only on $n$ such that the statement is true for any $\tau$ with $\tau \geq C(n)$.}, we have %
	\begin{align}\label{prop02}
		\E | F_{ \tau } |^2 I \Big( { \textstyle \frac{ 1 }{ n } } | F_{ \tau } |^2 \leq { \textstyle \frac{ 1 }{ e  } } \big( { \textstyle \E \frac{ 1 }{ n } } | F_{ \tau } |^2 \big)^{ \frac{ n + 4 }{ n + 2 } } \Big)
		\leq \Big( \frac{ 1 }{ 2 } + \frac{ C(n) }{ \sqrt{\tau} } \Big) \E| F_{ \tau }  |^2  .
	\end{align}
 where $I(A)$ is the indicator function of the event $A$.	
\end{proposition}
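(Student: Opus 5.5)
The plan is to rewrite the left-hand side of \eqref{prop02} as a probability under an exponentially tilted measure, and then show that under this measure $\ln(\frac1n|F_\tau|^2)$ is a Brownian motion with asymptotically constant drift $\frac{n+4}{n+2}$, up to $O(1)$ corrections.

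\textbf{Step 1: reduction to a tilted measure.} By Proposition~\ref{prop:intermittency} with $p=1$, both bounds coincide, so $\E|F_\tau|^2=ne^\tau$. Hence the threshold in \eqref{prop02} is $\frac1e\exp(\frac{n+4}{n+2}\tau)$. I would compute $d\,\tr G$ for $G=F^*F$ by It\^o's formula, using \ref{brownian1}--\ref{brownian3}. This should show that $M_\tau:=\tr G_\tau/(ne^\tau)$ is a positive martingale with $M_0=1$. Define $dQ=M_\tau\,dP$ on $\mathcal F_\tau$. Then the left-hand side of \eqref{prop02} divided by $\E|F_\tau|^2$ equals
\[
Q\Big(\ln\big(\tfrac1n\tr G_\tau\big)\le \tfrac{n+4}{n+2}\tau-1\Big).
\]
This is consistent with the moment exponent $\lambda(p)=p+\frac{2p(p-1)}{n+2}$ of Proposition~\ref{prop:intermittency}, since $\lambda'(1)=\frac{n+4}{n+2}$ is the expected tilted mean rate.

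\textbf{Step 2: drift and quadratic variation under $Q$.} Using isotropy, I expect the quadratic variation to have the form
\[
d\langle \tr G\rangle = \tfrac{4}{n+2}\,\big(a_n\tr G^2-b_n(\tr G)^2\big)\,d\tau ,
\]
with constants fixed by \ref{brownian3} and normalized so that the ratio $r:=\frac{d\langle\tr G\rangle}{(\tr G)^2d\tau}$ satisfies $r\le \frac4{n+2}$, with equality exactly when $G$ has rank one. Then $L:=\ln(\frac1n\tr G)$ satisfies $dL=(1-\frac r2)d\tau+dN$ under $P$. By Girsanov with density $M$, the drift under $Q$ becomes $1+\frac r2$. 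So under $Q$,
\[
L_\tau=\tfrac{n+4}{n+2}\tau-D_\tau+\tilde N_\tau ,
\qquad
D_\tau:=\int_0^\tau\big(\tfrac2{n+2}-\tfrac r2\big)\,d\sigma\ge0 ,
\]
where $\tilde N$ is a $Q$-martingale with $\langle\tilde N\rangle_\tau=\frac4{n+2}\tau-2D_\tau$.

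\textbf{Step 3 (main obstacle): $\E_Q D_\tau\le C(n)$ uniformly in $\tau$.} The claim is that under the tilt the Gram matrix becomes rank-one dominated, with only an $O(1)$ amount of time spent away from that regime. The deficit $\frac2{n+2}-\frac r2$ is comparable to $\sum_{i\ge2}\mu_i/\mu_1$, where $\mu_1\ge\mu_2\ge\dots$ are the eigenvalues of $G$. My first attempt would be the following.
\begin{itemize}
\item Derive, via the isotropic projection onto the spectrum, Dyson-type SDEs for the $\mu_i$.
\item Apply It\^o to $\ln(\mu_2/\mu_1)$, or to $\tr G^2/(\tr G)^2$, under $Q$.
\item Show that its $Q$-drift is $\le -c(n)<0$ in the region where the ratio is small, and bounded elsewhere.
\item Conclude via a Lyapunov-function argument that $\E_Q\int_0^\tau \mu_2/\mu_1\,d\sigma\le C(n)$.
\end{itemize}
I expect the extra $Q$-drift from Girsanov to help, since the tilt by $\tr G\approx\mu_1$ favors growth of the top eigenvalue.

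\textbf{Step 4: Gaussian comparison.} By the Dambis--Dubins--Schwarz theorem, $\tilde N_\tau=\beta(\frac4{n+2}\tau-2D_\tau)$ for a $Q$-Brownian motion $\beta$. The event in Step 1 is contained in
\[
\Big\{\beta\big(\tfrac4{n+2}\tau\big)\le -1+D_\tau+\sup_{s\in[\frac4{n+2}\tau-2D_\tau,\,\frac4{n+2}\tau]}\big|\beta(s)-\beta(\tfrac4{n+2}\tau)\big|\Big\}.
\]
By Markov's inequality, $Q(D_\tau\ge K)\le C/K$. On $\{D_\tau<K\}$ the oscillation term is $O(\sqrt K)$ with high probability. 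Moreover, $Q\big(\beta(\frac4{n+2}\tau)\le x\big)\le\frac12+\frac{C|x|}{\sqrt\tau}$ for $x>0$. Choosing $K$ to balance $C/K$ against $\sqrt K/\sqrt\tau$ gives a bound $\frac12+o(1)$.

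To reach the sharp rate $C(n)/\sqrt\tau$, I would use exponential moments $\E_Q e^{\delta D_\tau}\le C$, obtained from the same Lyapunov argument, in place of Markov's inequality. Finally, $-1+D_\tau$ is favorable whenever $D_\tau<1$, which only helps.
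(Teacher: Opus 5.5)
\textbf{Verdict: genuine gap.} Steps 3 and 4 both fall short: the key estimate is unproven, and the Gaussian comparison loses a logarithm, so the stated rate $\frac12+\frac{C(n)}{\sqrt\tau}$ is not reached.

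Your Steps 1--2 are correct. They are the paper's setup in probabilistic language:
$\E\hat R\hat\zeta(\tau,\hat R)$ in Lemma~\ref{lem:zeta} is $\E_Q\hat\zeta$. Your deficit rate is $\dot D_t=\frac{2n}{\alpha_n}\frac{\tr^2G_t-\tr G_t^2}{\tr^2G_t}$. Equation \eqref{zetahat2} is the backward equation of $\hat\sigma=L-\tau$ under $Q$ when $D\equiv0$. It\^o's formula under $Q$ produces exactly the paper's error term $-\dot D_t(\partial_{\hat\sigma}\hat\zeta+\partial_{\hat\sigma}^2\hat\zeta)$.

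\textbf{First gap: Step 3.} This step carries the whole difficulty, and you leave it as a plan. The Dyson/Lyapunov route is not routine. The eigenvalue SDEs are singular at the starting point $G=\mathrm{id}$, where all eigenvalues coincide. Also, ``negative drift where $\mu_2/\mu_1$ is small, bounded elsewhere'' does not bound the time spent away from the rank-one regime; you would need a global Lyapunov inequality. The missing idea is Lemma~\ref{Lem:2by2}. The pair $(\E\tr^2G,\E\tr G^2)$ solves a closed linear ODE, so $\E(\tr^2G-\tr G^2)=n(n-1)e^{\lambda_2\tau}$ with $\lambda_2=2-\frac{2}{n-1}<2$. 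Combine this with $\frac{\tr^2G-\tr G^2}{\tr G}\le\sqrt{\tr^2G-\tr G^2}$ and Jensen. In your notation this gives
\[
\E_Q\dot D_t=\tfrac{2}{\alpha_n}\,e^{-t}\,\E\,\tfrac{\tr^2G_t-\tr G_t^2}{\tr G_t}\lesssim_n e^{-t/(n-1)} .
\]
This bound is pointwise in time, hence stronger than $\E_QD_\tau\le C(n)$, and it takes two lines.

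\textbf{Second gap: Step 4 does not give the stated rate.} The shift in the threshold is $D_\tau$ plus the oscillation. On $\{D_\tau<K\}$ this is $\le K+O(\sqrt K)$, so the Gaussian term costs $K/\sqrt\tau$, not $\sqrt K/\sqrt\tau$.
\begin{itemize}
\item With Markov's inequality you therefore get $\frac12+C\tau^{-1/4}$.
\item With uniform exponential moments of $D_\tau$ (themselves unproven) you must take $K\sim\log\tau$. You end with $\frac12+C\frac{\log\tau}{\sqrt\tau}$, not $\frac12+\frac{C(n)}{\sqrt\tau}$.
\end{itemize}
This is not an artifact of the split. Information on the law of $D_\tau$ alone cannot rule out that the deficit is spent late, exactly when $\tilde N$ is near the threshold.

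What yields $C(n)/\sqrt\tau$ is the time-localization above: the deficit is spent while order-$\tau$ Brownian time remains to smooth it out. The paper implements this by applying It\^o's formula to a solution of \eqref{zetahat2} with smooth terminal data (equal to $1$ below $\hat\sigma^*$ and $0$ above $\hat\sigma^*+1$). Its derivatives decay like $\|\partial_{\hat\sigma}\hat\zeta(t)\|_\infty\lesssim(1+\tau-t)^{-1/2}$ and $\|\partial^2_{\hat\sigma}\hat\zeta(t)\|_\infty\lesssim(1+\tau-t)^{-1}$. The error is then $\lesssim\int_0^\tau e^{-t/(n-1)}(1+\tau-t)^{-1/2}\,dt\lesssim\tau^{-1/2}$. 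The bound $\hat\zeta(0,0)\le\frac12$ follows from comparison with the explicit Gaussian kernel of the constant-coefficient equation.
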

\begin{remark}\label{cor:nontightness} The information in Proposition~\ref{prop:nontightness} can easily be upgraded to the lower bound as is done in \cite[Theorem 3 \& (118)]{MOW25}
 \begin{align*}
\E  |F_\tau|^{2p} \gtrsim_{n,p}  \bbE^{1+\frac{n+4}{n+2}(p-1)}  |F_\tau|^{2},
\end{align*}
which is weaker than \eqref{2pthmoment} but not completely unrelated. This exponent is precisely the linearization of the quadratic exponent $p+\frac{2p(p-1)}{n+2}$ around $p=1$. 
\end{remark}

\noindent
The results in Propositions~\ref{prop:intermittency} and ~\ref{prop:nontightness} were previously proven in the case $n=2$ (see \cite[Lemma 2, Lemma 4]{MOW25}). The argument relied on the fact that the law of $R=\frac{1}{2}\tr F^*F$ can be identified as the Itô evolution %
\begin{align*}
    dR= R_{\tau} d\tau +\sqrt{R_\tau^2-1} dw, \quad R_{\tau =0}=1,
\end{align*} 
where $\{w_\tau\}_{\tau\geq 0}$ is a one-dimensional Brownian motion. In any dimension $n$, as a consequence of the $\textbf{O}(n)$-invariance, it is not difficult to see that one obtains an evolution for the $(n-1)$ quantities $\tr F^*F, \tr (F^*F)^2,\dots ,\tr (F^*F)^{n-1}$. However, we are unable to leverage this evolution in order to characterize the evolution of $R$. In this paper, we take a different route: Relying on the observation that the quantities $\E \tr (F^*F)^2 $ and $\E\tr^2 F^*F $ satisfy a linear system of ODEs which is closed in any dimension $n$ and can be solved explicitly, we learn that $\bbE\tr (F^*F)^2$ can be well approximated by $\bbE \tr^2 F^* F$. This in turn allows us to approximately close the equation for $ R $ in dimensions $ n > 2 $.

\medskip
 \noindent
Note that $|F_\tau|^2$ is close to a stochastic exponential, which is consistent with Proposition~\ref{prop:intermittency} and \ref{prop:nontightness}, namely $\ln |F_\tau|^2$ behaves like a Gaussian random variable with mean $(1-\frac{2}{n+2})\tau$ and variance $\frac{4\tau}{n+2}$.

\subsection*{Acknowledgment}
We thank Anna Wienhard and Corentin Le Bars for helpful discussions concerning random walks on groups. We thank Peter Morfe and Ofer Zeitouni for helpful discussions. S.K. and F.O. acknowledge funding by the Deutsche Forschungsgemeinschaft (DFG, German Research Foundation) - CRC/TRR 388 "Rough Analysis, Stochastic Dynamics and Related Fields" - Project ID 516748464.

\section{Strategy of Proof}

\noindent
Assumptions \ref{brownian1}-\ref{brownian3} on the Brownian motion $B$ can be equivalently expressed in terms of the geometric Brownian motion $F$ as follows: 
\begin{lemma}[\cite{MOW25}, Lemma 1]
    Let $\{F_\tau\}_{\tau \ge 0}$ be the solution to SDE \eqref{F} where $\{B_\tau\}_{\tau \ge 0}$ satisfies Assumptions \ref{brownian1}-\ref{brownian3}. Then  
    \begin{enumerate}[label=(F\arabic*)]
    \item Isotropy: \label{F1} $OF O^{-1}=_{\text{law}}F$ for all $O\in \textbf{O}(n)$,
    \item\label{F2} Itô=Stratonovich: $F_\tau \circ dB=F_\tau dB$,
    \item \label{F3} Normalization: $\bbE[|F_\tau|^2 ]=n e^\tau $.
    \end{enumerate}
\end{lemma}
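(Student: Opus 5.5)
The plan is to derive (F1)–(F3) directly from the structure of the Brownian motion $B$ and the SDE \eqref{F}.

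\emph{Isotropy (F1).} Fix $O\in\textbf{O}(n)$ and set $\tilde F:=OFO^{-1}$ and $\tilde B:=OBO^{-1}$. Conjugating \eqref{F} by the constant matrix $O$ gives
\[
d\tilde F=\tilde F\circ d\tilde B, \qquad \tilde F_{\tau=0}=id .
\]
So $\tilde F$ solves the same SDE driven by $\tilde B$. By \ref{brownian1}, $\tilde B=_{\text{law}}B$ as processes: a Brownian motion is determined in law by its covariance, and conjugation preserves the Brownian property. Strong existence and uniqueness for this linear SDE with smooth coefficients then gives pathwise uniqueness, hence uniqueness in law (Yamada–Watanabe). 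Therefore $\tilde F=_{\text{law}}F$.

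\emph{Itô $=$ Stratonovich (F2).} The Stratonovich correction is $\frac12 d[F,B]$. Using $dF=F\,dB+\dots$, this equals
\[
\tfrac12 F\, d[B,B], \qquad d[B,B]_{ik}=\sum_j d[B_{ij},B_{jk}] .
\]
By the definition of quadratic covariation for Brownian motion, $d[B,B]=\frac{d}{d\tau}\E[B_\tau B_\tau]\,d\tau$, which vanishes by \ref{brownian2}. Hence $F\circ dB=F\,dB$.

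\emph{Normalization (F3).} Apply Itô's formula to $|F|^2=\tr F^*F$ using the Itô form from (F2):
\[
d\tr F^*F=2\tr F^*F\,dB+\tr\big(dB^*\,F^*F\,dB\big).
\]
The first term is a martingale; its square-integrability follows from standard moment bounds for linear SDEs. For the second term, note that the quadratic variation $\frac{d}{d\tau}\E[B^*_{\,\cdot} M B_{\,\cdot}]$ for a fixed matrix $M$ is a linear map in $M$. By \ref{brownian1} it is $\textbf{O}(n)$-equivariant, so it has the form $aM+b(\tr M)\,id+cM^*$. Rather than determining $a,b,c$ in general, I would only need the trace, and I would compute it from \ref{brownian3} together with equivariance. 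I expect this to be the main obstacle: the isotropic structure of the covariance of $B$ must be pinned down enough to yield $\E\tr(dB^*MdB)=\tr M\,d\tau$ for symmetric $M$. Once that holds, it gives $\frac{d}{d\tau}\E|F|^2=\E|F|^2$, and with $|F_0|^2=n$ we obtain $\E|F_\tau|^2=ne^\tau$.

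If that identity fails for general symmetric $M$, I would fall back on the explicit isotropic covariance of $\mathfrak{sl}(n)$-valued Brownian motion, written in terms of its symmetric-traceless and antisymmetric parts with variances fixed by \ref{brownian2}–\ref{brownian3}. I would then verify the trace identity by direct computation.
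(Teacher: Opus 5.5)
Your argument is correct. The step you flag as the main obstacle closes in one line with exactly the ingredients you name. Note that the paper does not prove this lemma itself; it quotes it from \cite{MOW25}.

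For fixed $M$, cyclicity gives $\tr\E[B_\tau^*MB_\tau]=\tr\big(M\,\E[B_\tau B_\tau^*]\big)$. By (B1) and $(OBO^{-1})^*=OB^*O^{-1}$, the matrix $\E[B_\tau B_\tau^*]$ commutes with every $O\in\textbf{O}(n)$, so it equals $c\,id$. Taking traces and using (B3), $nc=\E|B_\tau|^2=\tr\E[B_\tau^*B_\tau]=n\tau$. Equivalently, in your parametrization, $\tr\big(aM+b(\tr M)\,id+cM^*\big)=(a+nb+c)\tr M$, and $M=id$ together with (B3) forces $a+nb+c=1$. Hence $\tr(dB^*G\,dB)=\tr\big(G\,d[B,B^*]\big)=\tr G\,d\tau$ for every $G$, symmetric or not, and no fallback is needed. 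Isotropy is genuinely used here: the Itô correction involves $\E[BB^*]$, whereas (B3) only prescribes $\E[B^*B]$.

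The paper recovers (F3) internally by taking expectations in \eqref{trg}. There the computation stays in Stratonovich form $d\tr G=2\tr G\circ dB_{sym}$ and evaluates the correction \eqref{itostra} by splitting $B=B_{sym}+B_{skew}$ and invoking Lemma~\ref{lem:b}. You instead pass to Itô form first via (F2), which is where (B2) enters, and then need only the single identity $\E[B_\tau B_\tau^*]=\tau\,id$. The information is the same: expanding $\E[(B\pm B^*)^2]$ with (B2), (B3) and this identity yields \eqref{Bsymvar}. So your route is slightly more self-contained. The paper's form pays off later, because it exhibits the martingale part as $2\tr G\,dB_{sym}$, whose quadratic variation is exactly what Lemma~\ref{lem:quadraticvar} evaluates.
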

\noindent
Observe that \ref{F3} is nothing but Proposition~\ref{prop:intermittency} for $p=1$ in which case the upper and lower bounds coincide. 

\noindent
It turns out to be convenient to work with the Gram matrix
\begin{align}
    \label{:G} G:=F^*F \quad \text{ so that } \quad |F|^2=\tr G.
\end{align}
Note that $G$ is symmetric and positive definite; its square root appears in the polar factorization of $F$. Hence passing from $F$ to $G$ amounts to work with the quotient space $\textbf{SL}(n)/\textbf{SO}(n)$ of the Lie group $\textbf{SL}(n)$ by its maximal compact subgroup $\textbf{SO}(n)$ as advocated by differential geometers. Since in particular $G$ is diagonalizable with nonnegative eigenvalues, we have the inequality %
\begin{align}\label{convexity}
    \tr G^p \leq \tr ^pG \quad \text{for } p\geq 1.
\end{align}
It turns out to be convenient to monitor $\tr G^p$ next to $\tr^pG=|F|^{2p}$. The Stratonovich evolution for $G$, $G^2$ are easily derived:
\begin{align}
    dG&=G_\tau \circ dB+\circ dB^* G_\tau, \label{G} \\
    dG^2&=G_\tau\circ dB G_\tau+G^2_\tau \circ dB+\circ dB^* G^2_\tau+G_\tau\circ dB^* G_\tau.\label{G2}
\end{align}
Taking the trace in \eqref{G} and \eqref{G2} and using the cyclic property $\tr GB=\tr BG$ for any matrices $G$ and $B$, makes the symmetric part of $B$ appear
 \begin{align}
    d\tr G &=\tr G_\tau (\circ dB+\circ dB^*) =2\tr G_\tau \circ dB_{sym}, \label{trgstr} \\
    d\tr G^2&=2\tr G^2_\tau (\circ dB+\circ dB^*) =4\tr G^2_\tau\circ dB_{sym}. \label{trG2str} 
\end{align}
Using the chain rule for Stratonovich integral, we also have \begin{align}
    d\tr^2 G&= 4\tr G_\tau \tr G_\tau\circ dB_{sym}.\label{tr2Gstr}
\end{align}
\noindent
The following lemma characterizes $B_{sym}$.
\begin{lemma}[\cite{MOW25},(74)]\label{lem:b}
   Let $\{B_\tau\}_{\tau\geq 0}$ be a Brownian motion on $\frak{sl}(n)$ satisfying Assumptions \ref{brownian1}-\ref{brownian3}. Then the symmetric part $B_{sym}:=\frac{B+B^*}{2}$ and skew-symmetric part $B_{skew}:=\frac{B-B^*}{2}$ are independent Brownian motions with the property that 
   \begin{align}
    \bbE[B_{sym,\tau}^2]= -\bbE[B_{skew,\tau}^2]=\frac{\tau id}{2}.\label{Bsymvar}
\end{align}
\end{lemma}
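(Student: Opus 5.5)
The plan is to reduce everything to the covariance tensor of $B$. A Brownian motion on the vector space $\frak{sl}(n)$ is a centered Gaussian process with independent stationary increments, so its law is determined by the constant tensor
\begin{align*}
C_{ijkl}:=\tfrac{1}{\tau}\bbE[B_{ij,\tau}B_{kl,\tau}].
\end{align*}
Since $B_{sym}$ and $B_{skew}$ are linear images of $B$, they are automatically Brownian motions, centered Gaussian with independent stationary increments. Moreover, the pair $(B_{sym},B_{skew})$ is jointly Gaussian. Hence independence of the two processes reduces to vanishing of the cross-covariances $\bbE[B_{sym,ij,\tau}B_{skew,kl,\tau}]$ at a single time. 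Joint Gaussianity together with independent increments of $B$ then gives independence at the level of processes.

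The first step is to pin down $C$ using \ref{brownian1}. Conjugation invariance means $C$ is an $\textbf{O}(n)$-invariant $4$-tensor. By classical invariant theory for $\textbf{O}(n)$ it is therefore a combination
\begin{align*}
C_{ijkl}=a\,\delta_{ij}\delta_{kl}+b\,\delta_{ik}\delta_{jl}+c\,\delta_{il}\delta_{jk}.
\end{align*}
Each constraint then becomes a linear equation in $a,b,c$:
\begin{itemize}
\item The trace-free condition $\sum_iC_{iikl}=0$ gives $na+b+c=0$.
\item Condition \ref{brownian2}, i.e. $\sum_jC_{ijjl}=0$, gives $a+b+nc=0$.
\item Condition \ref{brownian3}, i.e. $\sum_jC_{jijl}=\delta_{il}$, gives $a+nb+c=1$.
\end{itemize}
For $n\ge2$ the first two equations force $a=c$ and $b=-(n+1)a$, and the third then determines $a=-\frac{1}{(n+2)(n-1)}$. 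The explicit values are not actually needed below; only the structure of $C$ and these three identities are used.

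The second step is the cross-covariance:
\begin{align*}
4\,\bbE[B_{sym,ij}B_{skew,kl}]/\tau=C_{ijkl}-C_{ijlk}+C_{jikl}-C_{jilk}.
\end{align*}
Symmetrizing $C$ in $(i,j)$ yields $2a\,\delta_{ij}\delta_{kl}+(b+c)(\delta_{ik}\delta_{jl}+\delta_{il}\delta_{jk})$. This expression is symmetric in $(k,l)$, so the antisymmetrization in $(k,l)$ kills it, and independence follows.

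The third step is the variance identity \eqref{Bsymvar}. Expanding gives
\begin{align*}
4\bbE[B_{sym}^2]=\bbE[BB]+\bbE[BB^*]+\bbE[B^*B]+\bbE[B^*B^*],
\end{align*}
and likewise
\begin{align*}
4\bbE[B_{skew}^2]=\bbE[BB]-\bbE[BB^*]-\bbE[B^*B]+\bbE[B^*B^*].
\end{align*}
By \ref{brownian2}, $\bbE[BB]=0$, and hence also $\bbE[B^*B^*]=(\bbE[BB])^*=0$. By \ref{brownian3}, $\bbE[B^*B]=\tau\,id$. The remaining term is $\bbE[BB^*]_{il}=\tau\sum_jC_{ijlj}=\tau(a+nb+c)\delta_{il}=\tau\delta_{il}$. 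Substituting yields $\bbE[B_{sym}^2]=\frac{\tau}{2}id=-\bbE[B_{skew}^2]$.

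The only non-mechanical ingredient is the invariant-theory description of $\textbf{O}(n)$-invariant $4$-tensors, which I would quote as standard (Weyl's first fundamental theorem). For $n=2$ I would double-check it separately: there the spanning set of three tensors is still correct for the full orthogonal group, since reflections exclude the $\epsilon$-tensor terms. I expect this check to be the main point requiring care.
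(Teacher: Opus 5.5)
Your proof is correct. The covariance ansatz, the three linear constraints and their solution $a=c=-1/\alpha_n$, $b=(n+1)/\alpha_n$, the vanishing cross-covariance, and the variance computation all check out. Your worry about $n=2$ is unnecessary: the first fundamental theorem for the full group $\textbf{O}(n)$ holds in every dimension, and the three $\delta$-tensors only become linearly dependent when $n=1$.

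The paper gives no argument for this lemma; it imports it from \cite{MOW25} and then uses it as input to Lemma~\ref{lem:quadraticvar}. There the authors use only that the $\textbf{O}(n)$-invariant bilinear forms on symmetric matrices form a two-dimensional space, and they fix the two constants by testing on $G=id$ and $G=e_i\otimes e^j+e_j\otimes e^i$.

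Your route differs in that it determines the entire covariance of $B$ at once. The price is quoting invariant theory for $4$-tensors on all matrices, rather than for bilinear forms on symmetric matrices. What you gain goes beyond this lemma:
\begin{itemize}
\item It shows that independence uses only \ref{brownian1}. Equivalently, by Schur's lemma, symmetric and skew-symmetric matrices share no irreducible $\textbf{O}(n)$-component.
\item It gives $\bbE[(B_{sym,\tau})_{ij}(B_{sym,\tau})_{kl}]=\tau\big(-\frac{1}{\alpha_n}\delta_{ij}\delta_{kl}+\frac{n}{2\alpha_n}(\delta_{ik}\delta_{jl}+\delta_{il}\delta_{jk})\big)$. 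Both identities of Lemma~\ref{lem:quadraticvar} then follow by direct contraction, with no separate testing argument.
\end{itemize}

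Conversely, if you only want \eqref{Bsymvar}, the explicit tensor is unnecessary. By \ref{brownian1}, $\bbE[B_\tau B_\tau^*]$ commutes with every $O\in\textbf{O}(n)$, so it is a multiple of $id$. Its trace is $\tr\bbE[B_\tau^*B_\tau]=n\tau$ by \ref{brownian3}. Together with \ref{brownian2}, this already yields $\bbE[B_{sym,\tau}^2]=-\bbE[B_{skew,\tau}^2]=\frac{\tau}{2}id$.
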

\noindent
In order to pass from the Stratonovich to the Itô formulation, we need to compute Itô-Stratonovich correction terms and quadratic variations. Let us start with \eqref{trgstr}, which using \eqref{G} has the correction:\footnote{ For brevity and ease of notation, we purposefully avoid using $d[\tr \int_0^{\cdot} G_{\tau} dB  \, B_{sym}]$ in favor of $d[\tr G B B_{sym}]$. This is justified by the following computational rule for Itô integrals: Given a linear form $ f $, we can define the quadratic variation $ [ f ( \cdot \, B B_{ sym } ) ] $ as the linear map $ G \mapsto [ f ( G \, BB_{ sym } ) ] $. Equipped with this definition we have
\begin{align*}
	\Big[ f \big( \int_0^{ \cdot } G_{ \tau } d B \, B_{ sym } \big) \Big] 
	= \int_0^{ \cdot } G_{ \tau } .  d [ f ( \cdot \, B B_{ sym } ) ] .
\end{align*}
In words: to compute the variation $ [\tr \int_0^{\cdot} G dB  \, B_{sym}] $, we replace $ G $ by a deterministic objects, and integrate the result against the process $ G $.}
 \begin{align} \begin{aligned}
    d[\tr G B_{sym}]&=d[\tr G (B B_{sym} +B_{sym}B^*)]\\& =  d[\tr G (B_{ sym}^2+B_{skew}B_{sym}+B_{sym}^2- B_{sym}B_{skew})]\\ & = 2 d[\tr G B^2_{ sym}]\overset{\eqref{Bsymvar}}{=}\tr G_\tau d\tau. \end{aligned} \label{itostra}
\end{align}
Hence the Itô formulation of the equation \eqref{trgstr} becomes \begin{align}
    d\tr G &=2\tr G_\tau dB_{sym}+\tr G_\tau d\tau. \label{trg} 
\end{align} 
The quadratic variation of the martingale part is $  4  d[\tr  G B_{ sym} \tr  G B_{ sym}]$. The following lemma will be used to characterize this quadratic variation as well as the variations coming from equations \eqref{trG2str}, \eqref{tr2Gstr}. The nontrivial dependence on dimension $n$ arises from this lemma the proof of which relies on combinatorial arguments. 
\begin{lemma} \label{lem:quadraticvar} For any symmetric matrices $ G,\tilde G$, we have 
\begin{align}
      d\left[\tr G B_{sym} \tr \tilde G B_{sym}\right]&=  \frac{d\tau}{\alpha_n} \left(n \tr G \tilde G - \tr G \tr \tilde G \right),\label{trGBtrGB}\\  d\left[ \tr GB_{sym}  \tilde G B_{sym}\right]&=\frac{d\tau}{2\alpha_n}\left( (n-2) \tr G \tilde G +n\tr G \tr \tilde G \right), \label{trGBGB}
\end{align}
where 
\begin{align}
    \alpha_n := (n-1)(n+2). \label{alpha}
\end{align}
\end{lemma}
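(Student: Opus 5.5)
The plan is to reduce both identities to a single explicit formula for the covariance of the entries of $B_{sym}$, and then contract indices. By Lemma~\ref{lem:b}, $S:=B_{sym}$ is a Brownian motion on the symmetric traceless matrices, so $d[S_{ij}S_{kl}]=C_{ijkl}\,d\tau$ for a constant tensor $C$. Both sides of \eqref{trGBtrGB} and \eqref{trGBGB} are bilinear in $(G,\tilde G)$, and $G,\tilde G$ are deterministic here, as stipulated in the footnote to \eqref{itostra}. Hence it suffices to identify $C$ and contract it against $G$ and $\tilde G$.

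\emph{Step 1: form of $C$.} Assumption \ref{brownian1} gives $OSO^{-1}=_{\text{law}}S$ for all $O\in\textbf{O}(n)$, so $C$ is an $\textbf{O}(n)$-invariant rank-four tensor. By classical invariant theory, $C$ is a combination of $\delta_{ij}\delta_{kl}$, $\delta_{ik}\delta_{jl}$ and $\delta_{il}\delta_{jk}$. The symmetry $S_{ij}=S_{ji}$ forces the last two coefficients to agree, so
\begin{align*}
C_{ijkl}=a(\delta_{ik}\delta_{jl}+\delta_{il}\delta_{jk})+b\,\delta_{ij}\delta_{kl}.
\end{align*}
This invariance step is the only point needing a little care. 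If one prefers to avoid quoting invariant theory, one can argue directly: using reflections $O=\mathrm{diag}(\pm1)$ and permutation matrices, the only nonvanishing covariances are $\mathbb{E}S_{ii}^2$, $\mathbb{E}S_{ii}S_{jj}$ ($i\neq j$) and $\mathbb{E}S_{ij}^2$ ($i\neq j$). A rotation by $\pi/4$ in a coordinate plane then relates these three quantities, which yields exactly the above form.

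\emph{Step 2: fix $a,b$.} Tracelessness $\sum_i S_{ii}=0$ gives $\sum_i C_{iikl}=(2a+nb)\delta_{kl}=0$, hence $b=-2a/n$. Next, \eqref{Bsymvar} gives $\sum_j C_{ijjk}=\tfrac12\delta_{ik}$. The left side equals $(a(n+1)+b)\delta_{ik}=a\frac{(n-1)(n+2)}{n}\delta_{ik}$. Therefore
\begin{align*}
a=\frac{n}{2\alpha_n},\qquad b=-\frac{1}{\alpha_n}.
\end{align*}

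\emph{Step 3: contractions.} For \eqref{trGBtrGB}, write $\tr GS\,\tr\tilde GS=G_{ji}\tilde G_{lk}S_{ij}S_{kl}$. Contracting with $C$ and using the symmetry of $G$ and $\tilde G$ gives
\begin{align*}
2a\tr G\tilde G+b\tr G\tr\tilde G=\frac{1}{\alpha_n}\left(n\tr G\tilde G-\tr G\tr\tilde G\right).
\end{align*}
For \eqref{trGBGB}, write $\tr GS\tilde GS=G_{ij}S_{jk}\tilde G_{kl}S_{li}$ and contract with $C_{jkli}$. The three terms produce, respectively, $a\tr G\tilde G$, $a\tr G\tr\tilde G$ and $b\tr G\tilde G$. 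Their sum is
\begin{align*}
(a+b)\tr G\tilde G+a\tr G\tr\tilde G=\frac{1}{2\alpha_n}\left((n-2)\tr G\tilde G+n\tr G\tr\tilde G\right),
\end{align*}
as claimed. Beyond Step 1, everything is routine index bookkeeping.
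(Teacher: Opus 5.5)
Your proposal is correct and follows essentially the paper's argument: $\textbf{O}(n)$-isotropy reduces everything to two constants, which are then fixed by the tracelessness of $B_{sym}$ and the normalization $\mathbb{E}[B_{sym,\tau}^2]=\frac{\tau}{2}id$. The only difference is organizational. You determine the covariance tensor of $B_{sym}$ once and obtain both identities by contraction, whereas the paper applies the two-dimensionality of invariant symmetric bilinear forms separately to each form and solves two $2\times 2$ systems by testing with $id$ and with $e_i\otimes e^j+e_j\otimes e^i$.
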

\noindent
Using Lemma~\ref{lem:quadraticvar}, the quadratic variation of the martingale part of the equation \eqref{trg} becomes
\begin{align}\label{qv}
    d[\tr  G B_{ sym} \tr  G B_{ sym}]=\frac{1}{\alpha_n} (n \tr  G_\tau^2-\tr^2 G_\tau )d\tau.
\end{align} 
Moreover, the Itô form of the equation \eqref{trG2str} becomes
\begin{align}\label{trg2}
d\tr G^2=4\tr G_\tau^2dB_{sym}+\frac{2n}{\alpha_n} \tr^2 G_\tau d\tau +\left(2+\frac{2(n-2)}{\alpha_n}\right)\tr  G_\tau^2d\tau. 
\end{align}
More details on the computation of the correction is given after \eqref{TrGpst-ito}. 
Note that this is not closed because of the presence of $\tr^2 G$. Similarly, using the Itô formula, one sees \begin{align}
\begin{aligned}    d\tr^2 G &=4 \tr G_\tau \tr G_\tau dB_{\text{sym}} +2 \tr^2 G_\tau d\tau+ \frac{4}{\alpha_n} (n \tr  G_\tau^2-\tr^2 G_\tau )d\tau\\&=4 \tr G_\tau \tr G_\tau dB_{\text{sym}} +\left(2-\frac{4}{\alpha_n}\right) \tr^2 G_\tau d\tau+ \frac{4n}{\alpha_n} \tr  G_\tau^2 d\tau. \end{aligned}\label{tr2g}
\end{align}
Doing similar computations on integer powers leads to the following differential equations on the level of expectations:
\begin{lemma}\label{lem:odes}
   Let $\{F_\tau\}_{\tau\geq 0}$ be the process satisfying \eqref{F} and recall that $G=F^*F$. Then we have
   \begin{align}\label{trpg}
 \frac{ d\bbE\tr^p G}{d\tau}& =(p-\frac{2p(p-1)}{\alpha_n}) \bbE\tr^p G_\tau +\frac{2p(p-1)n}{\alpha_n}  \bbE\tr^{p-2} G_\tau\tr  G_\tau^2  
\end{align}
for any $p>0$ and 
\begin{align}\label{trgp}
   \frac{ d \bbE\tr G^p}{d\tau}=(p+\frac{p(p-1)(n-2)}{\alpha_n}) \bbE\tr G_\tau^p  +\frac{pn}{\alpha_n}\sum_{j=1}^{p-1}\bbE\tr G_\tau^j \tr G_\tau^{p-j}  
\end{align}
for $p\geq 1$ integer. 
\end{lemma}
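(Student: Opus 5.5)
Both identities follow from the same procedure. Derive the Itô SDE for $\tr^p G$ and for $\tr G^p$, take expectations so that the martingale parts drop out, and then differentiate in $\tau$. For the martingale parts to have vanishing expectation one needs integrability of $\tr^p G_\tau$ locally uniformly in $\tau$. I would obtain this by a stopping-time localization at $\tr G\le M$ combined with Gronwall: the drift is bounded by $C_{n,p}\tr^p G$ because $\tr G^2\le \tr^2G$ by \eqref{convexity}. Then let $M\uparrow\infty$ by monotone convergence.

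\emph{First identity \eqref{trpg}.} Apply the Itô formula to $x\mapsto x^p$ with $x=\tr G$, using \eqref{trg} and \eqref{qv}:
\begin{align*}
d\tr^pG = p\tr^{p-1}G\,(2\tr G\,dB_{sym}+\tr G\,d\tau) + \tfrac{p(p-1)}{2}\tr^{p-2}G\cdot 4\,\tfrac{1}{\alpha_n}(n\tr G^2-\tr^2G)\,d\tau .
\end{align*}
The drift is $\big(p-\tfrac{2p(p-1)}{\alpha_n}\big)\tr^pG+\tfrac{2p(p-1)n}{\alpha_n}\tr^{p-2}G\tr G^2$, which is exactly \eqref{trpg} after taking expectations. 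For non-integer $p>0$, and in particular $p<2$, the function $x^p$ is still smooth on $(0,\infty)$. Since $\tr G\ge n(\det G)^{1/n}=n>0$ by AM-GM, no singularity arises.

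\emph{Second identity \eqref{trgp}.} Generalize \eqref{G2}. By the Stratonovich product rule,
\[
dG^p=\sum_{k=0}^{p-1}G^k\,(G\circ dB+\circ dB^*G)\,G^{p-1-k}.
\]
Taking the trace and using cyclicity gives $d\tr G^p=2p\tr G^p\circ dB_{sym}$. The Itô correction is $p\,d[\tr G^p B_{sym}]$, where $d(G^p)$ is replaced by its martingale part $\sum_k G^k(G\,dB+dB^*G)G^{p-1-k}$. By the argument in \eqref{itostra}, the cross terms between $B_{sym}$ and $B_{skew}$ vanish by independence (Lemma~\ref{lem:b}). Write $dB=dB_{sym}+dB_{skew}$ and $dB^*=dB_{sym}-dB_{skew}$. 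The $B_{skew}$ contributions then pair only with $B_{sym}$ and hence vanish, so the martingale part of $dG^p$ paired with $B_{sym}$ is
\[
\sum_{k=0}^{p-1}\big(G^{k+1}\,dB_{sym}\,G^{p-1-k}+G^k\,dB_{sym}\,G^{p-k}\big).
\]
Taking the trace against $dB_{sym}$ gives terms of the form $\tr G^{j}B_{sym}G^{p-j}B_{sym}$, for $j$ ranging over $1,\dots,p$ and over $0,\dots,p-1$ respectively. Each such term is evaluated with \eqref{trGBGB}, taking $G\to G^j$ and $\tilde G\to G^{p-j}$ (both symmetric):
\[
d[\tr G^jB_{sym}G^{p-j}B_{sym}]=\tfrac{d\tau}{2\alpha_n}\big((n-2)\tr G^p+n\tr G^j\tr G^{p-j}\big).
\]
Here $j=0$ and $j=p$ give $\tr G^0=n$. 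In particular the $(n-2)\tr G^p+n^2\tr G^p$ endpoint terms, combined with $2p\cdot\tfrac12\cdot$ (the factor from $B_{sym}^2$), should reproduce the "$p$" coefficient. As a check, for $p=1$ this recovers \eqref{itostra}: $\tfrac{1}{2\alpha_n}\big((n-2)+n^2\big)\tr G=\tfrac{(n-1)(n+2)}{2\alpha_n}\tr G$.

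Collecting terms, with the factor $2p$ and the Itô $\tfrac12$, the coefficient of $\tr G^p$ should come out as $p+\tfrac{p(p-1)(n-2)}{\alpha_n}$. The mixed terms should give $\tfrac{pn}{\alpha_n}\sum_{j=1}^{p-1}\tr G^j\tr G^{p-j}$; for $p=2$ this matches \eqref{trg2}.

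The main obstacle is this bookkeeping. One has to count correctly how often each $j$ occurs between the two sums, $1\le j\le p$ and $0\le j\le p-1$, and make sure the endpoint contributions combine into exactly $p\,\tr G^p$. I would organize the count by symmetry $j\leftrightarrow p-j$ and verify it against the cases $p=1$ and $p=2$ given above.
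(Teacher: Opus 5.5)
Your proposal is correct and follows the paper's proof. For the first identity you apply Itô's formula for $x\mapsto x^p$ to \eqref{trg} with \eqref{qv}. For the second you use $d\tr G^p=2p\tr G^p\circ dB_{sym}$ with Itô correction $p\,d[\tr G^pB_{sym}]$, drop the $B_{skew}$ parts by independence, and evaluate each pairing with \eqref{trGBGB}.

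The bookkeeping you flag as the main obstacle follows directly from the index ranges you already wrote down. Each $1\le j\le p-1$ occurs twice, while the endpoints $j=0,p$ occur once each and contribute $\tfrac12\tr G^p\,d\tau$ apiece (equivalently via \eqref{Bsymvar}, as the paper does); this is exactly \eqref{qvgp2}. Your localization argument and the observation $\tr G\ge n$ for non-integer $p$ are extra care that the paper omits.
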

\noindent
With these equations at hand, we are practically done by using inequality \eqref{convexity} or variants of it to get 
\begin{align}\label{lpestimates}
n   e^{(p+\frac{2p(p-1)}{n+2})\tau} \leq  \bbE\tr  G_\tau^p \leq   \bbE\tr^p  G_\tau \leq  n^pe^{(p+\frac{2p(p-1)}{n+2})\tau},
\end{align}
which transfers to $F$, yielding Proposition~\ref{prop:intermittency}.
\noindent
Now let us discuss the proof of Proposition~\ref{prop:nontightness}. The observation to implement the proof established in \cite{OW24} in case $n>2$ is delicate. A priori, the Itô evolutions satisfied by observables of $\tr G$, say $\zeta(\tau,R)$ for $R=\tr G$ are not closed similarly to \eqref{trg2}. However only in $n=2$, this system reduces to a single evolution of $R=\tr G$ because $\tr G^2=4(\tr^2 G - \det G)$, and only then were we able to deduce intermittency using $R$, see \cite[Section 3]{MOW25}. Instead, we take a different road: It is based on the observation that the expectations, $\bbE \tr^2 G_{\tau}$, $\bbE \tr G_\tau^2$ satisfy a closed linear system of ODEs which can be solved explicitly. 
\begin{lemma}\label{Lem:2by2}
  It holds
   \begin{align*}
    \frac{d}{d\tau}\begin{bmatrix}
        \bbE \tr^2 G \\ \bbE \tr G^2
    \end{bmatrix}  = A \begin{bmatrix}
        \bbE \tr^2 G_\tau \\ \bbE \tr G_\tau^2
    \end{bmatrix}, \quad \quad
   \begin{bmatrix}
        \bbE \tr^2 G_{\tau=0} \\ \bbE \tr  G_{\tau=0}^2
    \end{bmatrix} = \begin{bmatrix}
    n^2\\n
\end{bmatrix}
\end{align*}
where 
\begin{align*}
    A=(2-\frac{4}{\alpha_n}) \begin{bmatrix}
        1&0\\
        0&1
    \end{bmatrix}+\frac{2n}{\alpha_n}\begin{bmatrix}
        0&2\\
        1&1
    \end{bmatrix}.
\end{align*}
The matrix $A$ has the eigenvalues $\lambda_1=2+\frac{4}{n+2}> \lambda_2=2-\frac{2}{n-1}$ with left eigenvectors $v_1=\begin{bmatrix}
    1\\2
\end{bmatrix}$ and $v_2=\begin{bmatrix}
    1\\-1
\end{bmatrix}$. As a consequence, for all $\tau>0$, 
\begin{align}\label{difference}
 \bbE  \tr^2G_{\tau}-\bbE \tr G_\tau^2  = n(n-1) e^{\lambda_2\tau}.  
\end{align}
\end{lemma}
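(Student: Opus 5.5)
The plan is to take expectations in the Itô equations \eqref{tr2g} and \eqref{trg2}. The martingale parts $4\tr G_\tau\tr G_\tau dB_{sym}$ and $4\tr G^2_\tau dB_{sym}$ have zero mean, provided we know they are true martingales. For this it suffices to have finite moments of $\tr^4 G$ on compact time intervals, which follows from a standard Grönwall bound on polynomial moments of the linear SDE \eqref{F}; alternatively one localizes with stopping times and passes to the limit by monotone convergence. Taking expectations then gives
\begin{align*}
\frac{d}{d\tau}\bbE\tr^2G &= \Big(2-\frac{4}{\alpha_n}\Big)\bbE\tr^2G_\tau+\frac{4n}{\alpha_n}\bbE\tr G_\tau^2,\\
\frac{d}{d\tau}\bbE\tr G^2 &= \frac{2n}{\alpha_n}\bbE\tr^2G_\tau+\Big(2+\frac{2(n-2)}{\alpha_n}\Big)\bbE\tr G_\tau^2 .
\end{align*}
Since $2+\frac{2(n-2)}{\alpha_n}=2-\frac{4}{\alpha_n}+\frac{2n}{\alpha_n}$, this is exactly the matrix $A$ in the statement. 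The initial data follow from $G_0=id$, which gives $\tr^2 G_0=n^2$ and $\tr G_0^2=n$. This step is essentially bookkeeping from \eqref{trg2} and \eqref{tr2g}, which come from Lemma~\ref{lem:quadraticvar}.

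Next I compute the spectrum of $A$. Write $A=(2-\frac4{\alpha_n})I+\frac{2n}{\alpha_n}M$ with $M=\begin{bmatrix}0&2\\1&1\end{bmatrix}$. Then $M$ has characteristic polynomial $\mu^2-\mu-2$, so its eigenvalues are $2$ and $-1$. The left eigenvectors are $(1,2)$ for $\mu=2$, since $(1,2)M=(2,4)$, and $(1,-1)$ for $\mu=-1$, since $(1,-1)M=(-1,1)$. Hence
\[
\lambda_1=2+\frac{4n-4}{\alpha_n}=2+\frac{4}{n+2},\qquad \lambda_2=2-\frac{4+2n}{\alpha_n}=2-\frac{2}{n-1},
\]
using $\alpha_n=(n-1)(n+2)$.

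Finally, pair the ODE with the left eigenvector $v_2$. This gives $\frac{d}{d\tau}\,v_2\cdot x=v_2^*Ax=\lambda_2\, v_2\cdot x$, where $x=(\bbE\tr^2G,\bbE\tr G^2)$. The scalar $\bbE\tr^2G-\bbE\tr G^2$ therefore solves a scalar linear ODE with initial value $n^2-n$, which yields \eqref{difference}.

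The only genuinely delicate point is justifying that the stochastic integrals are true martingales, i.e.\ finiteness of the relevant moments. I would handle it by localization combined with the a priori bound $\bbE|F_\tau|^{8}<\infty$ from Grönwall.
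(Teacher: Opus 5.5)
Your proposal is correct and follows the route the paper takes implicitly: it gives no separate proof of this lemma, since the ODE system is exactly Lemma~\ref{lem:odes} at $p=2$ (equivalently, expectations of \eqref{tr2g} and \eqref{trg2}), and \eqref{difference} follows by pairing with the left eigenvector $v_2$, just as you do. Your justification that the stochastic integrals are true martingales, via the a priori bound on $\E|F_\tau|^8$, supplies a detail the paper leaves tacit.
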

\medskip\noindent
The identity \eqref{difference} reveals that the difference $\bbE \tr^2 G_\tau - \bbE \tr G^2_\tau$ grows at a slower exponential rate than either $\bbE \tr^2 G_\tau$ or $\bbE \tr G^2_\tau$ both of which grows like $e^{\lambda_1\tau}$.  This observation enables us to write an approximate Itô evolution for 
\begin{align}\label{R}
    R_\tau:=\tr G_\tau.
\end{align}
Indeed, recall from \eqref{trg} and \eqref{qv} that $R$ satisfies  \begin{align}
dR=2\tr G_\tau dB_{sym}+R_\tau d\tau, \quad \quad R_{\tau=0}=n ,\label{RSDE}
\end{align}
    with quadratic variation \begin{align*}
        d[RR]&=\frac{4}{\alpha_n}(n\tr G_\tau^2-R^2_\tau)d\tau .
    \end{align*}
   By definition of $\alpha_n$, see \eqref{alpha}, this
   can be rewritten as
\begin{align}\label{QuadVarR}
        d[RR]&=\frac{4}{n+2}R_\tau^2 d\tau+\frac{4n}{\alpha_n} (\tr G_\tau^2-\tr^2 G_\tau)d\tau .   \end{align}
        By the above lemma it is legitimate to neglect the last term in \eqref{QuadVarR} to obtain the approximate SDE
      \begin{align}\label{Rsde}
    dR \approx \frac{2}{\sqrt{n+2}} R_\tau dw+R_{\tau}d\tau, \quad \quad R_{\tau=0}=n,
\end{align}
where $w$ is a one-dimensional Brownian motion. Hence, $R$ is approximately a geometric Brownian motion $ ne^{(1-\frac{2}{n+2})\tau +\frac{2}{\sqrt{n+2}} w_\tau}$, which is consistent with the observation that $\ln R=\ln |F|^2$ is close to being normal with mean $(1-\frac{2}{n+2})\tau$ and variance $\frac{4\tau}{n+2}$ as claimed in the introduction. This motivates us to consider the following change of variables
\begin{align}\label{Rhat}
  R=n e^{\tau} \hat R ,
\end{align}
under which \eqref{Rsde} turns into
 \begin{align}\label{Rhatapprox}
   d\hat R \approx \frac{2}{\sqrt{n+2}}\hat R_{ \tau} dw, \quad \quad \hat R_{ \tau=0}=1. 
\end{align}
\medskip

\noindent
To capture the above approximation more quantitatively, we consider an observable $\zeta=\zeta( \tau, r)$ solving the Kolmogorov backward PDE \eqref{Rsde} for $R$  (when treating \eqref{Rsde} as an exact equality)
\begin{align}\label{backwardpde}
	   \frac{\partial \zeta}{\partial \tau}+r \frac{\partial \zeta}{\partial r}+\frac{2 }{n+2}r^2\frac{\partial^2\zeta}{\partial  r^2}=0.
\end{align}
Using the change of variables \eqref{Rhat} on the level of the variables $ r $, $ \hat r $ (i.e.~$ r = n e^{ \tau } \hat r $) the PDE \eqref{backwardpde} transforms into\footnote{We will detail the upcoming computations in the proof of Lemma \ref{lem:zeta}.}
\begin{align}\label{backwardpde2}
    \frac{\partial \zeta}{\partial \tau}+\frac{2}{n+2}\hat r^2\frac{\partial^2\zeta}{\partial \hat r^2}=0,
\end{align}
which is the Kolmogorov equation for \eqref{Rhatapprox}. 
The form of the observable in Proposition~\ref{prop:nontightness} also motivates us to make the ansatz that
 \begin{align}
    \zeta(\tau,\hat r)=\hat r \hat 
\zeta(\tau,\hat r) \label{zetatohatzeta}
\end{align} 
with $\hat \zeta$ that solves 
\begin{align}\label{zetahat}
\frac{\partial \hat \zeta}{\partial \tau}+2\hat r\frac{\partial \hat \zeta}{\partial\hat r}+\frac{2}{n+2}\hat r^2 \frac{\partial^2 \hat \zeta}{\partial \hat r^2}=0.
\end{align} 
Finally, due to the structure of \eqref{zetahat} it is natural to introduce the variable
\begin{align}\label{sigmahat}
    \hat \sigma = \ln \hat r ,
\end{align}
w.r.t.~which \eqref{zetahat} takes form \eqref{zetahat2} below.

\begin{lemma}\label{lem:zeta} 
   Suppose $\hat \zeta=\hat\zeta(\tau,\hat\sigma)$ solves \begin{align}\label{zetahat2}
\frac{\partial \hat \zeta}{\partial \tau}+\frac{2}{n+2}\frac{\partial \hat \zeta}{\partial\hat \sigma }+ \frac{2}{n+2}\frac{\partial^2 \hat \zeta}{\partial \hat \sigma^2}=0,
\end{align}
then \begin{align} \label{zetahatineq}
    \bbE \hat R \hat \zeta(\tau, \hat R) - \hat \zeta(\tau=0,\hat r= 1 ) \lesssim_{ n } \int_0^{\tau} d\tau' e^{-(1-\frac{\lambda_2}{2})\tau'} \sup_{\hat \sigma} \left|\frac{\partial \hat \zeta}{\partial\hat \sigma}+ \frac{\partial^2 \hat \zeta}{\partial\hat \sigma^2} \right|  ,
 \end{align}
 where $\lambda_2$ is defined in Lemma~\ref{Lem:2by2}. 
\end{lemma}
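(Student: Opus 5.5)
Write $\zeta(\tau,\hat r):=\hat r\,\hat\zeta(\tau,\ln\hat r)$. Apply Itô's formula to $\zeta(\tau,\hat R_\tau)$ with $\hat R_\tau=R_\tau/(ne^\tau)$, then take expectations. Since $\hat R_0=1$, this gives $\bbE\hat R\hat\zeta(\tau,\hat R)-\hat\zeta(0,1)$ as the time integral of the expected drift. The drift splits into an exact part that the PDE kills and a defect proportional to $\tr G^2-\tr^2G$; the defect is then controlled by \eqref{difference}.

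\textbf{Step 1: the SDE for $\hat R$.} By \eqref{RSDE}, $d\hat R=\frac{2}{ne^\tau}\tr G_\tau dB_{sym}$, so $\hat R$ is a martingale. By \eqref{QuadVarR},
\begin{align*}
d[\hat R\hat R]=\frac{4}{n+2}\hat R^2d\tau+\frac{4n}{\alpha_n}\frac{\tr G^2-\tr^2G}{n^2e^{2\tau}}d\tau .
\end{align*}

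\textbf{Step 2: Itô for $\zeta$.} We have $d\zeta(\tau,\hat R)=\partial_\tau\zeta\,d\tau+\partial_{\hat r}\zeta\,d\hat R+\frac12\partial_{\hat r}^2\zeta\,d[\hat R\hat R]$. In $\hat\sigma=\ln\hat r$,
\begin{align*}
\partial_{\hat r}\zeta=\hat\zeta+\partial_{\hat\sigma}\hat\zeta,\qquad \hat r\,\partial_{\hat r}^2\zeta=\partial_{\hat\sigma}\hat\zeta+\partial_{\hat\sigma}^2\hat\zeta .
\end{align*}
Hence the exact part of the drift is
\begin{align*}
\hat r\Big(\partial_\tau\hat\zeta+\frac{2}{n+2}(\partial_{\hat\sigma}\hat\zeta+\partial^2_{\hat\sigma}\hat\zeta)\Big),
\end{align*}
which vanishes by \eqref{zetahat2}. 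The remaining defect drift is
\begin{align*}
\frac{2}{\alpha_n n}e^{-2\tau}\,\frac{\tr G^2-\tr^2G}{\hat R}\,\big(\partial_{\hat\sigma}\hat\zeta+\partial^2_{\hat\sigma}\hat\zeta\big)(\tau,\ln\hat R).
\end{align*}

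\textbf{Step 3: controlling the defect.} Taking expectations (the martingale term drops under suitable integrability), the defect is bounded by
\begin{align*}
\lesssim_n e^{-2\tau}\sup_{\hat\sigma}|\partial_{\hat\sigma}\hat\zeta+\partial_{\hat\sigma}^2\hat\zeta|\;\bbE\frac{\tr^2G-\tr G^2}{\hat R}.
\end{align*}
Here $\tr^2G-\tr G^2\ge0$ by \eqref{convexity}, and $1/\hat R=ne^\tau/\tr G$. The difficulty is the random denominator, since \eqref{difference} only controls $\bbE(\tr^2G-\tr G^2)$ itself.

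\textbf{Main obstacle: the denominator $\hat R$.} First plan: use Cauchy–Schwarz in the form
\begin{align*}
\frac{\tr^2G-\tr G^2}{\tr G}\le\big(\tr^2G-\tr G^2\big)^{1/2},
\end{align*}
which holds because $\tr^2G-\tr G^2\le\tr^2G$. Jensen then bounds the expectation by $(\bbE(\tr^2G-\tr G^2))^{1/2}=(n(n-1))^{1/2}e^{\lambda_2\tau/2}$, using \eqref{difference}. Collecting factors,
\begin{align*}
e^{-2\tau}\cdot ne^{\tau}\cdot e^{\lambda_2\tau/2}=n\,e^{-(1-\frac{\lambda_2}{2})\tau},
\end{align*}
which is exactly the rate in \eqref{zetahatineq}. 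Integrating in $\tau'$ from $0$ to $\tau$ then gives the claim.

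\textbf{Remaining technical points.} One must justify dropping the martingale term, e.g.\ by localization together with the moment bounds of Proposition~\ref{prop:intermittency}, or by assuming $\hat\zeta$ bounded with bounded derivatives. One must also check the sign convention: $\lesssim$ with a one-sided difference is covered, since we bound the absolute value.
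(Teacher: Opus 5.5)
Your proposal is correct and follows the paper's proof. The steps match: Itô's formula for $\hat R\,\hat\zeta(\tau,\ln\hat R)$, cancellation of the exact drift by \eqref{zetahat2}, and control of the defect via $\frac{\tr^2G-\tr G^2}{\tr G}\le(\tr^2G-\tr G^2)^{1/2}$, Jensen, and \eqref{difference}. The only difference is cosmetic: you apply Itô directly to the driftless $\hat R$ instead of to $R$ followed by the change of variables \eqref{Trafo}, and the inequality you call Cauchy--Schwarz is just the elementary bound $x/t\le\sqrt{x}$ for $0\le x\le t^2$.
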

\noindent
Note that $\lambda_2<2$ is important to leverage \eqref{zetahatineq} in the proof of Proposition \ref{prop:nontightness}.

\section{Proofs}
\subsection{Proofs of Lemmas}

\begin{proof}[Proof of Lemma~\ref{lem:quadraticvar}]
Recall that the space of symmetric bilinear forms on $\{G:G^*=G\}$ that are invariant under conjugation by $ \textbf{O}(n)$ is two dimensional and spanned by $\tr G \tilde G$ and $\tr G \tr \tilde G$, see for example \cite[Lemma 16]{MOW25} for a proof. Note that 
$$(G, \tilde G) \mapsto   \bbE \tr GB_{sym,\tau} \tr \tilde GB_{sym,\tau}$$
is such a symmetric bilinear form thanks to the $\textbf{O}(n)$ invariance in law of $B$ in \ref{brownian1}, which transmits to $B_{sym}$. Since $B_{sym}$ is a Brownian motion, its tensorial quadratic covariation agrees with its second moment, which grow linearly in $\tau$; hence for non-random $G,\tilde G$, 
$$[\tr GB_{sym} \tr \tilde GB_{sym}]_{\tau}=\bbE \tr GB_{sym,\tau} \tr \tilde GB_{sym,\tau}=\text{const}\, \tau.$$ 
As a consequence, we have
\begin{align}\label{qvgeneral}
       d[ \tr GB_{sym} \tr \tilde GB_{sym}] =(a_{n} \tr G \tilde G+b_{n} \tr G \tr\tilde G ) d\tau
    \end{align}
for some constants $a_n,b_n$ only depending on $n$. 
\noindent
In order to find $a_n,b_n$, we start with the choice $G=\tilde G=id$. On the one hand, we have $d[\tr B_{sym}\tr B_{sym}  ]=0$ and on the other hand $n a_{n}$ and $n^2 b_n$ which this leads to 
\begin{align}
      0=n a_n+n^2 b_n.\label{system1}
    \end{align}
Second, we test with $G=\tilde G=e_i\otimes e^j +e_j\otimes e^i$ (without implicit sum), where $e^i=e_i^*$ denotes the dual basis. On one hand, 
\begin{align}\label{BG}
    B_{sym}G&=e_i \otimes B_{sym}e^j+e_j \otimes B_{sym}e^i 
    \end{align}
and by the cyclic property of the trace and the symmetry of $B_{sym}$, we have
     \begin{align*}
  \tr GB_{sym}&=  \tr B_{sym}G   =B_{sym}e^j.  e_i+B_{sym}e^i. e_j=2 B_{sym} e^i. e_j 
\end{align*}
which leads to
\begin{align*}
        d[\tr GB_{sym}\tr \tilde G B_{sym}]=4d[(B_{sym}e^i. e_j) (B_{sym}e^i. e_j)]. 
    \end{align*}
   Summing $i,j$ over $\{1,\dots,n\}$, the left-hand side of the equation \eqref{qvgeneral} becomes $4d[\tr B_{sym}^2 ]$ which is equal to $2n d\tau$ by 
   \begin{align}\label{qvtrb2}
        [\tr B^2_{sym}]_\tau=\bbE \tr B_{sym,\tau}^2=\frac{n\tau}{2}.
        \end{align}
  On the other hand, we have
  \begin{align*}
      \tr G&= e^j. e_i+e^i . e_j=2\delta_i^j, \quad \quad  \text{ and thus}, & \tr^2 G&=4 \delta_i^j,\\
      G^2&=\delta_i^j( e_i\otimes e^j+e_j\otimes e^i)+e_i\otimes e^i+e_j\otimes e^j, \text{ and thus}, & \tr G^2&=2\delta_i^j +2. 
  \end{align*}
  Again summing over $i,j$ for $\tr^2G $ and $\tr G^2$, and then plugging in \eqref{qvgeneral} we get that
  \begin{align}
      2n=2n(n+1)a_n +4nb_n .\label{system2}
  \end{align} 
The system \eqref{system1}, \eqref{system2} has the solution 
\begin{align*}
        a_{n}=\frac{n}{\alpha_n} \quad \text{and} \quad b_{n}=-\frac{1}{\alpha_n}
    \end{align*}
    where $\alpha_n$ is as defined in \eqref{alpha}. Hence, finally using \eqref{Bsymvar} leads to \eqref{trGBtrGB} in Lemma~\ref{Lem:2by2}.

\medskip \noindent
Similarly, the bilinear form 
\begin{align*}
    (G,\tilde G) \mapsto \bbE \tr GB_{sym,\tau}\tilde G B_{sym,\tau}
\end{align*}
is $\textbf{O}(n)$ invariant by $\textbf{O}(n)$ invariance in law of $B$ in \ref{brownian1}. Moreover, it is symmetric thanks to the cyclic property of the trace.
So, by the same argument as above, we can write 
\begin{align*}
  d[ \tr GB_{sym} \tilde GB_{sym}] =(a_{n} \tr G \tilde G+b_{n} \tr G\tr\tilde G) d\tau
\end{align*}
for some constants $a_n$, and $b_n$ depending only on $n$. In order to find them, we will test on the same pair of matrices. This in turn implies that the right-hand sides of the equations stay the same. Let us compute the left-hand sides: For $G=\tilde G=id$, we have $d[\tr GB_{sym}\tilde G B_{sym}] =\frac{n}{2}d\tau$ by \eqref{qvtrb2} and for $G=\tilde G =e_i\otimes e^j +e_j\otimes e^i$, using \eqref{BG} we have 
\begin{align*}
    B_{sym}GB_{sym}G&=B_{sym} e^j . e_i e_i\otimes B_{sym}e^j+ B_{sym} e^i . e_j e_j\otimes B_{sym}e^i\\ & +B_{sym} e^j . e_j e_i\otimes B_{sym}e^i+B_{sym} e^i . e_i e_j\otimes B_{sym}e^j
\end{align*}
and taking the trace and using the symmetry of $B_{sym}$, we get
\begin{align*}
     \tr B_{sym}GB_{sym}\tilde G= \tr GB_{sym}\tilde G B_{sym} =2B_{sym}e^j. e_iB_{sym} e^j. e_i+2B_{sym}e^i. e_iB_{sym} e^j. e_j.
\end{align*}
Finally summing over $i,j$, the left-hand side of the equation becomes 
\begin{align*}
    \sum_{i,j=1}^n d[\tr G B_{sym}G B_{sym}]=2d[\tr B_{sym}^2]+\underbrace{2d[\tr B_{sym}\tr B_{sym}}_{=0}]=2d[\tr B_{sym}^2]=nd\tau
\end{align*}
where we have used \eqref{qvtrb2} again. Hence we obtain the following system for $a_n$ and $b_n$
\begin{align*}
 \frac{n}{2}&=  na_{n}+n^2 b_{n},\\
n&=2n(n+1)a_{n}+4n b_{n},
    \end{align*}
 which has the solution 
 \begin{align*}
     a_{n}=\frac{n-2}{2\alpha_n}, \quad \text{and} \quad b_{n}=\frac{n}{2\alpha_n}
 \end{align*}   
 and we obtain the second claim \eqref{trGBGB} in Lemma~\ref{Lem:2by2}.
\hfill\end{proof}

\begin{proof}[Proof of Lemma~\ref{lem:odes}]
    Note that $G_\tau$ and $G^2_\tau$ satisfy \eqref{G}, \eqref{G2} and more generally for $p\geq 1$ integer, $G_\tau^p$ satisfies 
    \begin{align}
    dG^p&=\sum_{j=1}^p G_\tau^{j-1}dGG_\tau^{p-j}=\sum_{j=1}^p G_\tau^{j}\circ dB G_\tau^{p-j}+\sum_{j=1}^p G_\tau^{j-1} \circ dB^* G_\tau^{p-j+1} . \label{Gp}
\end{align}
Taking the trace in \eqref{Gp}, and using cyclic property of the trace, we get the following Stratonovich SDE: 
\begin{align*}
    d \tr G^p=\tr  dG^p=p\tr G_\tau^p(\circ dB+\circ dB^*)=2p\tr G_\tau^p\circ dB_{{sym}}.
\end{align*}
Now, we want to switch to the Itô formulation which can be done using the Itô-Stratonovich correction term as follows: 
\begin{align}\label{TrGpst-ito}
    d \tr G^p=2p \tr G_\tau^p\circ dB_{{sym}}=2p\tr  G_\tau^p dB_{{sym}}+p d[\tr G^pB_{{sym}}]
\end{align}
where using \eqref{Gp} we see 
\begin{align*}\begin{aligned}
    d[\tr  G^pB_{sym}]&=d[\tr \left(\sum_{j=1}^p G^j B G^{p-j}+G^{j-1} B^*G^{p-j+1}\right)B_{sym}]\\ &=d[\tr (G^p B B_{ sym}+B^*G^p B_{sym}) +\sum_{j=1}^{p-1} \tr G^j (B+B^*)G^{p-j}B_{ sym}].
  \end{aligned}
\end{align*}
Together with the cyclic property of the trace, we rewrite it
as 
\begin{align}\begin{aligned}
    d[\tr  G^pB_{sym}]=d[\tr G^p( BB_{sym}+ B_{sym}B^*) +2\sum_{j=1}^{p-1} \tr G^j B_{sym}G^{p-j}B_{ sym}].\end{aligned} \label{qvgp}
\end{align}
 We can also replace $B$, $B^*$ in the first two terms using $B_{sym},B_{skew}$ and thanks to their independence, see Lemma~\ref{lem:b}, we obtain
 \begin{align}
\begin{aligned}
    &d[\tr G^p (BB_{ sym}+B_{sym}B^*)]\\&= d[\tr G^p (B_{ sym}^2+B_{skew}B_{sym}+B_{sym}^2- B_{sym}B_{skew})]\\
  & = 2 d[\tr G^p B^2_{ sym}]\overset{\eqref{Bsymvar}}{=}\tr G_\tau^p d\tau. \end{aligned}   \label{indep}
\end{align}
Inserting \eqref{indep} and \eqref{trGBGB} in \eqref{qvgp}, we finally get
\begin{align}\begin{aligned}
    d[\tr  G^pB_{sym}]
    &=\tr  G_\tau^p d\tau+\frac{1}{\alpha_n}\sum_{j=1}^{p-1}\left( (n-2) \tr  G_\tau^p+n \tr  G_\tau^j \tr  G_\tau^{p-j}\right)d\tau \\
    &= (1+\frac{(p-1)(n-2)}{\alpha_n}) \tr  G_\tau^p d\tau +\frac{n}{\alpha_n}\sum_{j=1}^{p-1}\tr  G_\tau^j \tr  G_\tau^{p-j} d\tau.\end{aligned} \label{qvgp2}
\end{align}
Plugging \eqref{qvgp2} into \eqref{TrGpst-ito}, we get the Itô-SDE for $\tr G^p$:
 \begin{align}\label{tracegp}
    d \tr  G^p=2p\tr  G_\tau^p dB_{sym}+(p+\frac{p(p-1)(n-2)}{\alpha_n}) \tr  G_\tau^p d\tau +\frac{pn}{\alpha_n}\sum_{j=1}^{p-1}\tr  G_\tau^j \tr  G_\tau^{p-j} d\tau.
\end{align}
Using the Itô formula with the function $f(x)=x^p$ in \eqref{trg} and the quadratic variation \eqref{qv}, we have 
 \begin{align}\begin{aligned}
 d\tr^p  G
 =2p \tr^{p-1}\; G_\tau \tr \; G_\tau dB_{\text{sym}} +\left( p-\frac{2p(p-1)}{\alpha_n}\right) \tr^p\; G_\tau d\tau +\frac{2np(p-1)}{\alpha_n}  \tr^{p-2}\;G_\tau\tr\; G_\tau^2  d\tau.    
 \end{aligned}\label{tracepg} 
\end{align}
Finally, taking the expectations in \eqref{tracepg} and \eqref{tracegp} concludes the proof. 
\hfill \end{proof}

\begin{proof}[Proof of Lemma~\ref{lem:zeta}] We start from \eqref{RSDE} and \eqref{QuadVarR} that imply for $ \zeta = \zeta ( \tau, R_{ \tau } ) $
\begin{align}\label{eqn02} \nonumber
	d \zeta 
	&= \text{martingale} + \Big( \frac{ \partial \zeta }{ \partial \tau } +R_{ \tau }  \frac{ \partial \zeta }{ \partial r } \Big) d \tau 
	+ \frac{ 1 }{ 2 } \frac{ \partial^2 \zeta }{ \partial r^2 } d [ R \, R ] \\
	&= \text{martingale} + \Big( \frac{ \partial \zeta }{ \partial \tau } + R_{ \tau } \frac{ \partial \zeta }{ \partial r } + \frac{ 2 }{ n + 2 } R_{ \tau }^2 \frac{ \partial^2 \zeta }{ \partial r^2 }  \Big) d \tau
	+ \frac{ 2 n }{ \alpha_{ n } } \Big( \tr G_{ \tau }^2 - \tr^2 G_{ \tau } \Big) \frac{ \partial^2 \zeta }{ \partial r^2 } d \tau .
\end{align}
Due to the term $\tr  G^2$, this differential equation \eqref{eqn02} even on the level of expectation is not closed. To overcome this, we will invoke Lemma~\ref{Lem:2by2}, more specifically identity \eqref{difference}. 

\noindent
Choosing our observable $ \zeta $ in \eqref{eqn02} according to \eqref{backwardpde} as we argued to be equivalent to \eqref{zetahat2} the sum in the first parenthesis vanishes. Indeed, recall from \eqref{Rhat}, \eqref{zetatohatzeta} and \eqref{sigmahat} that we change variables according to
\begin{align*}
	r = n e^{ \tau } \hat r ,
	\quad \zeta = \hat r \, \hat \zeta ,
	\quad \hat \sigma = \ln \hat r ,
\end{align*}
so that $ \frac{ \partial }{ \partial r } = e^{ - \tau } \frac{ 1 }{ n } \frac{ \partial }{ \partial \hat r }  $, $ \frac{ \partial }{ \partial \hat r } = \frac{ 1 }{ \hat r } \frac{ \partial }{ \partial \hat\sigma } $ and readily
\begin{align}\label{Trafo}
	\frac{ \partial \zeta }{ \partial \tau } = \hat r \frac{ \partial \hat \zeta }{ \partial \tau } - \hat r \Big( \hat \zeta + \frac{ \partial \hat \zeta }{ \partial \hat \sigma } \Big)  ,
	\quad r \frac{ \partial \zeta }{ \partial r } = \hat r \Big( 1 + \frac{ \partial  \hat \zeta }{ \partial \hat\sigma } \Big) , 
	\quad  r^2 \frac{ \partial^2 \zeta }{ \partial r ^2} = \hat r \Big( \frac{ \partial \hat \zeta }{ \partial \hat\sigma } + \frac{ \partial^2 \hat \zeta }{ \partial \hat\sigma^2 } \Big) .
\end{align}
Thus, for the term of our interest
\begin{align*}
	\frac{ \partial \zeta }{ \partial \tau } + r \frac{ \partial \zeta }{ \partial r }  + \frac{ 2 }{ n + 2 } r^2\frac{ \partial^2 \zeta }{ \partial r^2 }
	\stackrel{(\ref{Trafo})}{ = } \hat r \Big( \frac{ \partial \hat \zeta }{ \partial \tau } + \frac{ 2 }{ n + 2 } \Big( \frac{ \partial \hat \zeta }{ \partial \hat\sigma } + \frac{ \partial^2 \hat\zeta }{ \partial \hat\sigma^2 } \Big) \Big) ,
\end{align*}
which makes the operator in \eqref{zetahat2} appear, and \eqref{eqn02} implies the preliminary
\begin{align}\label{eqn01}
	\frac{d  \E \zeta }{ d \tau }
	\lesssim_n \sup_{ \hat \sigma } \Big| r \frac{ \partial^2 \zeta }{ \partial r^2 } \, \Big|
	\, \E \frac{ 1 }{ R_\tau } \big( \tr^2 G_{ \tau } - \tr G_{ \tau }^2 \big)  
	\stackrel{ (\ref{Trafo}) }{ \approx_n } \sup_{ \hat \sigma } e^{ - \tau } \Big| \frac{ \partial \hat \zeta }{ \partial \hat\sigma } + \frac{ \partial^2 \hat \zeta }{ \partial \hat\sigma^2 }
	 \Big|
	\, \E \frac{ 1 }{ R_{ \tau } } \big( \tr^2 G_{ \tau } - \tr G_{ \tau }^2 \big)  .
\end{align}
On the remaining expectation we use the elementary inequality
\begin{align*}
	0 \leq \frac{ 1 }{ R_{ \tau } } \big( \tr^2 G_{ \tau } - \tr G_{ \tau }^2 \big)
	\stackrel{ (\ref{R}) }{ = } \frac{ 1 }{   \tr G_{ \tau } } \big( \tr^2 G_{ \tau } - \tr G_{ \tau }^2 \big) 
	\leq \sqrt{ \tr^2 G_{ \tau } - \tr G_{ \tau }^2 } ,
\end{align*}
so that by Jensen's inequality and Lemma \ref{Lem:2by2}
\begin{align*}
	\E \frac{ 1 }{ R_{ \tau } } \big( \tr^2 G_{ \tau } - \tr G^2_{ \tau } \big)
	\leq \sqrt{ \E ( \tr^2 G_{ \tau } - \tr G_{ \tau }^2 ) } 
	\stackrel{ (\ref{difference}) }{ \lesssim_n } e^{ \frac{ \lambda_2 }{ 2 } \tau } .
\end{align*}
Hence \eqref{eqn01} turns into:
\begin{align*}
	\frac{d  \E \hat R \hat \zeta }{ d \tau } 
	\lesssim_n e^{ - ( 1 - \frac{ \lambda_2 }{ 2 } ) \tau } \sup_{ \hat \sigma } \Big| \frac{ \partial \hat \zeta }{ \partial \hat\sigma } + \frac{ \partial^2 \hat \zeta }{ \partial \hat\sigma^2 }
	\Big| .
\end{align*}
Integrating the estimate and using the second item in \eqref{Rhatapprox} yields the claim in \eqref{zetahatineq}.\hfill\end{proof}

\subsection{Proof of Proposition~\ref{prop:intermittency}}
Recall from Lemma~\ref{lem:odes}, we have for $p\geq 1$ integer
\begin{align*}
  \frac{  d\bbE[\tr  G^p]}{d\tau} =(p+\frac{p(p-1)(n-2)}{\alpha_n})\bbE \tr  G_\tau^p  +\frac{pn}{\alpha_n}\sum_{j=1}^{p-1}\bbE \tr  G_\tau^j \tr  G_\tau^{p-j} 
\end{align*}
 with initial condition $\tr G_{\tau=0}=\tr id=n$. Using $\tr  G^j \tr  G^{p-j} \geq \tr  G^p$ for $0\leq j\leq p$, we can bound the second term 
 \begin{align*}
    \sum_{j=1}^{p-1}\bbE \tr  G_\tau^j \tr  G_\tau^{p-j} \geq (p-1) \bbE \tr G_\tau^p
\end{align*}
which leads to the differential inequality 
\begin{align*}
      \frac{  d\bbE\tr  G^p}{d\tau} \geq (p+\frac{p(p-1)(2n-2)}{\alpha_n}) \bbE\tr  G_\tau^p
\end{align*}
where $\alpha_n$ is as defined \eqref{alpha}. Hence we get 
\begin{align*}
   \bbE\tr  G_\tau^p \geq n e^{(p+\frac{2p(p-1)}{n+2})\tau} .
\end{align*}
On the other hand, recall again from Lemma~\ref{lem:odes}, $\bbE \tr^p G_\tau$ satisfies  
\begin{align*}
 \frac{ d\bbE\tr^p G}{d\tau}& =(p-\frac{2p(p-1)}{\alpha_n}) \bbE\tr^p G_\tau +\frac{2p(p-1)n}{\alpha_n}  \bbE\tr^{p-2} G_\tau\tr  G_\tau^2 
\end{align*}
with initial condition $\bbE\tr^p G_{\tau=0}=n^p$. This time using $\tr  G^2\leq \tr^2  G$, we get the differential inequality
\begin{align*}
   \frac{d\bbE \tr^p  G}{d\tau}& \leq (p+\frac{p(p-1)(2n-2)}{\alpha_n}) \bbE\tr^p  G_\tau 
\end{align*}
which leads to
\begin{align*}
   \bbE\tr^p  G_\tau \leq n^pe^{(p+\frac{2p(p-1)}{n+2})\tau} .
\end{align*}
Hence we obtain \eqref{lpestimates} which is a restatement of Proposition~\ref{prop:intermittency}.
\qed

\subsection{Proof of Proposition~\ref{prop:nontightness}} 

For parameters $ \tau^*  \geq 0 $ and $ \hat{\sigma}^* $ that we will choose later we consider the terminal condition
\begin{align}\label{tc}
    \hat{\zeta}( \tau^*,\hat{\sigma})\begin{cases}
        = 1, \text{ if } \hat{\sigma} \leq \hat{\sigma}^*,\\
        \in [0,1] \text{ for all } \hat{\sigma},\\
        =0, \text{ if }\hat{\sigma}\geq \hat{\sigma}^*+1,
    \end{cases}
 \text{ with } \left| \frac{\partial^2 \hat{\zeta}( \tau^* ,\hat{\sigma}) }{\partial \hat{\sigma}^2}\right| \lesssim 1
 \end{align}
 for the equation \eqref{zetahat2} in Lemma~\ref{lem:zeta}.
It is readily seen from \eqref{tc} that we have
 \begin{align}
\bbE \hat R_{ \tau^* } \hat{\zeta}(\tau^*, \hat R_{ \tau^* }) \stackrel{ (\ref{tc}) }{ \geq } \bbE \hat R_{ \tau^* } I( \hat R_{ \tau^* } \leq \hat{R}^*) 
\quad \text{provided} \quad \hat\sigma^* =: \ln \hat R^{ * }. \label{ononehand}
\end{align}
Our goal is to choose $ \hat\sigma^* $ large enough so that at initial time $ \tau = 0 $ it holds
\begin{align}\hat{\zeta}( \tau=0, \hat\sigma = 0 ) \leq \frac{1}{2} , \label{initial}
\end{align}
so that Lemma~\ref{lem:zeta} implies
\begin{align}
 \bbE \hat R_{ \tau^* } I( \hat R_{ \tau^* } \leq \hat{R}^*) - \frac{ 1 }{ 2 }
 \stackrel{ (\ref{ononehand}) }{ \leq } \bbE   \hat{R}_{ \tau^* } \hat{\zeta}( \tau^*, \hat R_{ \tau^* }) - \frac{ 1 }{ 2 }
 \stackrel{ \eqref{zetahatineq} \& (\ref{initial}) }{ \lesssim_{ n } } \int_0^{\tau^*}d \tau\,  e^{ -  ( 1 - \frac{ \lambda_2 }{ 2 }) \tau}  \sup_{\hat \sigma}\left|\frac{\partial \hat \zeta}{\partial\hat \sigma}+ \frac{\partial^2 \hat \zeta}{\partial\hat \sigma^2} \right|  . \label{zetahatineqb}
\end{align}

\medskip
\noindent
To obtain \eqref{initial} let us note that the heat kernel for \eqref{zetahat2} with terminal data at $ \tau^*$ is a Gaussian with mean $- \frac{ 2 }{ n + 2 } ( \tau^*- \tau) $ and variance $ \frac{ 4 }{ n + 2 } ( \tau^* - \tau ) $. Moreover, by \eqref{tc} we know $\hat{\zeta}( \tau^*,\hat{\sigma}) \leq I(\hat{\sigma}\leq \hat{\sigma}^*+1)$ so that by the comparison principle we have
\begin{align*}
    \hat{\zeta}(\tau,\hat{\sigma}) \leq \Phi \Big( \frac{-\hat{\sigma}+\hat{\sigma}^*+1 - \frac{ 2 }{ n + 2 } ( \tau^*- \tau)}{\sqrt{ \frac{ 4 }{ n + 2 } ( \tau^*-\tau ) } } \Big)
    \stackrel{\text{at}\, \tau = 0,  \hat{\sigma} = 0 }{ = }\Phi \Big(\frac{\hat{\sigma}^*+1- \frac{ 2 }{ n + 2 } \tau^* }{\sqrt{ \frac{ 4 }{ n + 2 } \tau^*}} \Big) ,
\end{align*}
where $\Phi$ is the cumulative distribution function of a standard normal random variable. This turns into \eqref{initial} provided we choose $ \hat{\sigma}^* \leq \frac{ 2 }{ n + 2 } \tau^*- 1 $, or equivalently,
\begin{align}\label{eqnHatRStar}
	\hat{R}^*
	\stackrel{ (\ref{ononehand}) }{ = } e^{ \hat \sigma^* }
	\leq e^{ \frac{ 2 }{ n + 2 } \tau^*-1} .
\end{align}

\medskip

%
%
\noindent
To conclude the proof, it is left to show that the integral in \eqref{zetahatineqb} is small. To this end, note that using the estimates for the terminal data 
\begin{align*}
    \sup_{\hat\sigma } \left|\frac{\partial \hat{\zeta}( \tau^*,\hat{\sigma})}{\partial {\hat\sigma}}\right|+\left|\frac{\partial^2 \hat{\zeta}(\tau^*,\hat{\sigma})}{\partial^2 {\hat \sigma } }\right| 
    \lesssim 1
    \quad \text{and} \quad
        \int d{\hat{\sigma}}\left|\frac{\partial \hat{\zeta}( \tau^*,\hat{\sigma})}{\partial {\hat\sigma}}\right| 
    \lesssim 1
\end{align*}
we learn (by convolution with the heat kernel) that 
\begin{align*}
 \sup_{ \hat \sigma } \left|\frac{\partial \hat{\zeta} ( \tau^* , \hat\sigma ) }{\partial \hat{\sigma}}\right|  \lesssim_{ n }  \frac{1}{ \sqrt{1+ \tau^*- \tau}}, \hskip 10pt \sup_{ \hat\sigma } \left|\frac{\partial^2 \hat{\zeta}  ( \tau^* , \hat\sigma ) }{\partial \hat{\sigma}^2}\right| 
    \lesssim_{ n } \frac{1}{ 1+ \tau^*- \tau} .
\end{align*}
Hence, the integral term in \eqref{zetahatineqb} is bounded by
\begin{align*} \int_0^{\tau*}d\tau e^{ - ( 1 - \frac{ \lambda_2 }{ 2 } ) \tau}  \sup_{\hat \sigma}\left|\frac{\partial \hat \zeta}{\partial\hat \sigma}+ \frac{\partial^2 \hat \zeta}{\partial\hat \sigma^2} \right|  \lesssim_{ n }  \int_0^{\tau^*} d\tau e^{-\frac{4}{n-1} \tau  } \frac{1}{\sqrt{1+\tau^*-\tau}} \lesssim_{ n }  \frac{ 1 }{\sqrt{\tau^*}},
\end{align*}
and with some constant $ C = C ( n ) $ estimate \eqref{zetahatineqb} turns into
\begin{align}\label{ontheotherhand}
 \bbE \hat R_{ \tau^* } I( \hat R_{ \tau^* } \leq \hat{R}^*) \leq \frac{ 1 }{ 2 } +   \frac{ C(n) }{ \sqrt{ \tau^* } }.
\end{align}
Finally, undoing the change of variables we have
\begin{align*}
	\hat R_{ \tau^* }
	\stackrel{ \eqref{Rhat} }{ = } e^{ - \tau^* } { \textstyle \frac{ 1 }{ n } } R_{ \tau^* }
	\stackrel{ \ref{F3} , (\ref{R}) \& (\ref{:G}) }{ = } \frac{ | F_{ \tau^* } |^2 }{ \E | F_{ \tau^* } |^2 } ,
	\quad 
	\hat R^{ * }
	\stackrel{ (\ref{eqnHatRStar}) }{ \leq } e^{ \frac{ 2 }{ n + 2 } \tau^* - 1 } 
	\stackrel{\ref{F3}  }{ = } \frac{ 1 }{ e } \Big( \frac{ \E | F_{ \tau^* } |^2  }{ n } \Big)^{ \frac{ 2 }{ n + 2 } } ,
\end{align*}
so that since $ \frac{ 2 }{ n + 2 } + 1 = \frac{ n +  4 }{ n + 2 } $ 
\begin{align*}
	\hat R_{ \tau^* } I( \hat R_{ \tau^* } \leq \hat{R}^*)
	= \frac{ | F_{ \tau^* } |^2 }{ \E | F_{ \tau^* } |^2 } I \Big( { \textstyle \frac{ 1 }{ n } } | F_{ \tau^* } |^2 \leq { \textstyle \frac{ 1 }{ e } } \big( { \textstyle \frac{ \E | F_{ \tau^* } |^2  }{ n } } \big)^{ \frac{ n + 4 }{ n + 2 } } \Big) .
\end{align*}
Thus \eqref{ontheotherhand} turns into \eqref{prop02}. \qed


\begin{thebibliography}{99}

\bibitem{ABK24}
	S. Armstrong, A. Bou-Rabee, and T. Kuusi.
	\newblock{Superdiffusive central limit theorem for a Brownian particle in a critically-correlated incompressible random drift.}
	\newblock{\emph{arXiv preprint} arXiv:2404.01115 (2024).}

\bibitem{CHT22}
    G. Cannizzaro, L. Haunschmid-Sibitz, and F. Toninelli.
    \newblock $ \sqrt{ \log t } $-superdiffusivity for a
Brownian particle in the curl of the 2D GFF.
    \newblock \emph{The Annals of Probability}, 50(6):2475–2498, 2022.

\bibitem{CMOW22}
    G. Chatzigeorgiou, P. Morfe, F. Otto, and L. Wang.
    \newblock The Gaussian free-field as
a stream function: asymptotics of effective diffusivity in infra-red cut-off.
    \newblock to appear in \emph{Annals of
Probability, 2025.}

\bibitem{MOW25}
    P. Morfe, F. Otto, and C. Wagner.
    \newblock A critical drift-diffusion equation: Intermittent
behavior via geometric Brownian motion on SL(n).
    \newblock \emph{arXiv preprint} arXiv:2511.15473 (2025).

\bibitem{OW24}
    F. Otto and C. Wagner.
    \newblock A critical drift-diffusion equation: intermittent behavior. \newblock \emph{arXiv
preprint} arXiv:2404.13641 (2024).

\bibitem{TV12}
    B. Tóth and B. Valkó.
    \newblock Superdiffusive bounds on self-repellent Brownian polymers and
diffusion in the curl of the Gaussian free field in $ d = 2 $.
    \newblock \emph{Journal of Statistical Physics, 147:113–131,
2012}.

\end{thebibliography}
\end{document}